\numberwithin{equation}{section}
\theoremstyle{plain}
\newtheorem{thm}{Theorem}[section]
\newtheorem{lem}[thm]{Lemma}
\newtheorem{prop}[thm]{Proposition}
\newtheorem{rmkk}[thm]{Remark}
\newcommand{\enter}{\bigskip}
\date{}
\begin{document}
\author{Prasanta Kumar Barik${}^1$\footnote{{\it{${}$ Email address:}} prasant.daonly01@gmail.com/pbarik@tifrbng.res.in}, Ankik Kumar Giri${}^2$ and Rajesh Kumar${}^3$\\
\footnotesize ${}^1$Tata Institute of Fundamental Research Centre for Applicable Mathematics,\\ \small{Bangalore-560065, Karnataka, India}\\
\footnotesize ${}^2$Department of Mathematics, Indian Institute of Technology Roorkee,\\ \small{ Roorkee-247667, Uttarakhand, India}\\
\footnotesize ${}^3$Department of Mathematics, Birla Institute of Technology and Science, Pilani,\\ \small{ Pilani-333031, Rajasthan, India}\\
}

\title {Mass-conserving weak solutions to the coagulation and collisional breakage equation with singular rates }

\maketitle

\hrule \vskip 8pt

\begin{quote}
{\small {\em\bf Abstract.} In this article, the existence of mass-conserving solutions is investigated to the continuous coagulation and collisional breakage equation with singular coagulation kernels. Here, the probability distribution function attains singularity near the origin. The existence result is constructed by using both conservative and non-conservative truncations to the continuous coagulation and collisional breakage equation. The proof of the existence result relies on a classical weak $L^1$ compactness method.
\enter
}
\end{quote}

{\bf Keywords:} Weak compactness; Particle; Mass-conserving solution; Existence\\
\hspace{.5cm}
{\bf MSC (2010):} Primary: 45K05; Secondary: 34K30.

\vskip 10pt \hrule

\section{Introduction}\label{existintroduction1}
The coagulation and fragmentation models are a particular class of partial integro-differential equation in which two particles collide at a particular instant to form a larger particle by aggregation process or split into more than two fragments by nonlinear breakage process or a large particle splits into many particles by linear breakage process. Here, we consider a fully non-linear partial integro-differential equation i.e., continuous coagulation and collisional breakage equation (CCBE) where each particle is fully identified by its volume (or size) $v \in \mathds{R}_{+}:=(0, \infty)$. In recent papers \cite{Barik:2018Weak, Barik:2018Existence}, we have discussed the existence and uniqueness of weak solutions to continuous coagulation and collisional breakage model. This model has a lot of applications in different field of science, engineering and technology such as astrology and astrophysics. In this article, we focus on the case of existence of mass-conserving solutions to continuous CCBE. The general continuous CCBE reads as \cite{Brown:1995, Safronov:1972, Vigil:2006, Wilkins:1982}
\begin{eqnarray}\label{CCBE}
\frac{\partial g}{\partial t}  =\mathcal{C}_{B}(g)- \mathcal{CB}_{D}(g)  +\mathcal{B}_{B}(g),
\end{eqnarray}
where
\begin{eqnarray*}
\mathcal{C}_{B}(g)(v, t) := \frac{1}{2}\int_0^v  E(v-v', v') \varphi(v-v', v') g(v-v', t) g(v', t)dv',
\end{eqnarray*}
\begin{eqnarray*}
\hspace{-2.8cm} \mathcal{CB}_{D}(g)(v, t) : = \int_{0}^{\infty}  \varphi(v, v') g(v, t) g(v', t) dv',
\end{eqnarray*}
and
 \begin{align*}
\mathcal{B}_{B}(g) (v, t) :=  & \frac{1}{2} \int_{v}^{\infty} \int_{0}^{v'}  P(v|v'-v''; v'')E_1(v'-v'', v'') \nonumber\\
 &~~~~~~~~~~~~~~~\times \varphi(v'-v'', v'') g(v'-v'', t) g(v'', t)dv'' dv',
\end{align*}
with initial data
\begin{align}\label{Initialdata}
g(v, 0) = g^{in}(v) \ge 0\ \ \mbox{a.e.}.
\end{align}
Here the unknown $g(v, t) \ge 0$ is a function of volume variable $v \in \mathds{R}_{+}$ and the time variable $t \ge 0$ and is known as the concentration of particles. The coefficient $\varphi(v, v')$ is called the collision kernel which represents the coagulation rate which is non-negative and symmetric function  and describes the rate at which particles of volume $v$ and $v'$ interact and produce the larger particles of volume $v+v'$ with coalescence efficiency $E(v, v')$ and the breakage efficiency $E_1(v, v')$. Here, $E(v, v')+E_1(v, v')=1$. The probability distribution function $P(v|v';v'')$ gives the birth of particles of volume $v$ with the collision between particles of volumes $v'$ and $v''$. In addition, the transfer of volumes between particles $v'$ and $v''$ may occur. Furthermore, the probability distribution function $P$ is assumed to enjoy the following properties
\begin{align}\label{TNP}
&\int_0^{v'+v''} P(v|v';v'')dv =T_N(v', v'')\quad \forall \quad (v', v'') \in \mathds{R}_{+}^2,\nonumber\\
\text{where}&\ \underset{(v', v'')\in \mathds{R}_{>0}^2 }{\sup} T_N(v', v'')=T_N <\infty, \quad P(v|v';v'')dv=0 \quad \forall \quad v \ge v'+v'',
\end{align}
and
\begin{align}\label{MCP}
\int_0^{v'+v''} v P(v|v';v'')dv =v'+v''\quad \forall \quad v\in (0, v'+v'').
\end{align}
In equation \eqref{TNP}, $T_N(v', v'')$ is the total number of daughter particles obtained due to the collision between particles of volume $v'$ and $v''$. We have considered its supremum as $T_N$ which is a positive constant greater than or equal to 2 and equation (\ref{MCP}) shows the conservation of matter in the system during the collisional breakage events. However, the total matter may not conserve during the nonlinear coagulation and nonlinear breakage processes due to appearance of infinite gel in the system. This happens due to the high aggregation rate in compare to the fragmentation rate. This physical phenomena is known as gelation transition and the least time at which this happens is called as \emph{gelation time} \cite{Escobedo:2003, Leyvraz:1981}.\\

The total mass of particles in the system for coagulation and collisional breakage equation can be defined as
 \begin{align}\label{Totalmass}
\mathcal{N}_1(t)=\mathcal{N}_1(g(t)):=\int_0^{\infty} v g(v, t)dv,  \ \ t \ge 0.
\end{align}
In particular, if $t=0$, the total mass of particles in the system can be represented by the following notation:
 \begin{align}\label{Totalinitialmass}
\mathcal{N}_1^{in}=\mathcal{N}_1(g(0)):=\int_0^{\infty} v g^{in}(v)dv.
\end{align}

In equation \eqref{CCBE}, the first term $\mathcal{C}_B$ shows the birth of new particles of volume $v+v'$ due to the collision between particles of volumes $v$ and $v'$ through the aggregation process while the second term $\mathcal{CB}_D$ gives the death of particles of volume $v$  due to both coagulation and collisional breakage events. The last term $\mathcal{B}_B$ represents the formation of particles of volume $v$ due to the collisional breakage process.\\

The existence and uniqueness of mass-conserving weak solutions to the continuous coagulation and linear fragmentation equation with both nonsingular and singular kernels have been extensively studied in several articles, see \cite{Barik:2017Anote, Barik:2019, Barik:2018Mass, Camejo:2015, Giri:2012, Laurencot:2018, McLaughlin:1997} and references therein. However, there are only a few articles available in which the nonlinear fragmentation equation is described, see \cite{Cheng:1990, Cheng:1988, Ernst:2007, Kostoglou:2000, Laurencot:2001}. In \cite{Cheng:1990, Cheng:1988, Ernst:2007}, the authors have investigated the scaling solutions to the continuous nonlinear fragmentation equation whereas the analytic solutions for the different cases of kernels are discussed in \cite{Ernst:2007, Kostoglou:2000}. Later, the existence of weak solutions and the asymptotic behaviour of solutions to the discrete version of non-linear fragmentation equation are studied in \cite{Laurencot:2001}. To the best of our knowledge, the fully nonlinear homogeneous continuous coagulation and collisional breakage equation is described by Safronov first time, see \cite{Safronov:1972}. Later Wilkins \cite{Wilkins:1982} gave the geometrical interpretation of this model. Recently, we have studied the existence of weak solutions to the continuous CCBE with both non-singular and singular collision kernels by using conservative truncation to the original equation, see \cite{Barik:2018Weak, Barik:2018Existence}, where the singular collision kernel to the continuous CCBE is
\begin{equation*}
\varphi(v, v') \le k\frac{ (1+v)^{\beta} (1+v')^{\beta} } {(v+v')^{\alpha}},\ \beta \in [0, 1),\  \alpha \in (0, 1/2)\ \text{and}\ \beta-\alpha \in [0, 1).
\end{equation*}
 In addition, a uniqueness result is studied for a special case of collision kernel when $\beta=0$. However, the mass-conservation property of the weak solution is not investigated. Thus, it is clear that there is no uniqueness result available to the continuous CCBE for singular coagulation kernel which is stated in assumption $(A_1)$ in the next section. Hence, there may be some solutions which are either mass-conserving or not conserving the mass. Due to non-availability of the uniqueness result to the continuous CCBE, both conservative and non-conservative approximations are considered in this work. As we know a conservative approximations may always give mass-conserving solution whereas a non-conservative truncation is suitable to study the gelation transition, an obvious question arise is that whether a non-conservative approximation can give mass-conserving solution or not? The purpose of the present work is to provide a positive answer of this question as well as the existence of mass-conserving weak solution by using a conservative approximation to the original problem. The motivation of the present work is taken from \cite{Barik:2018Mass, Barik:2019, Filbet:2004Mass}\\

The content of the paper: we describe some assumptions on collision kernel, probability distribution function and coalesce efficiency in Section 2. Furthermore, the main result and some preliminary results for the convex function are stated in Section 3. The proof of the existence of mass-conserving weak solutions is shown by using a weak $L^1$ compactness method in this section.

 \section{Assumptions, Statement of Preliminaries and Main Result}
In this section, some assumptions on collision kernel $\varphi$, distribution function $P$, and the coalescence efficiency $E$ are stated.\\
\textbf{Assumptions:}
$(A_1)$ Let $\varphi$ be a non-negative measurable function on $\mathds{R}_{+} \times \mathds{R}_{+}$, and it satisfies $\varphi(v, v') \le k \frac{(1+v+v')}{(v+v')^{\alpha}}$ for all $(v, v') \in \mathds{R}_{+} \times \mathds{R}_{+}$, $0< \alpha  < \frac{1}{2}$ and for some constant $k > 0$,\\
\\
$(A_2)$ $E$ satisfies the following condition locally:
\begin{eqnarray}\label{probabilitys}
 E(v, v') \ge \frac{\eta(2 \alpha)-2}{\eta(2 \alpha)-1},\quad \forall\quad (v, v') \in (0, 1) \times (0, 1),
\end{eqnarray}
\\
$(A_3)$  there exists a positive constant $\eta(2 \alpha)>2$ (depending on $\theta$ and $\alpha$) such that
\begin{eqnarray*}
\int_0^{v'+v''} v^{-2\alpha} P(v|v';v'')dv \le \eta(2 \alpha) (v'+v'')^{-2\alpha},
\end{eqnarray*}
where $P(v|v';v'')=(\theta+2)\frac{v^{\theta}}{(v'+v'')^{1+\theta}}$, for $-1< \theta \le 0$ and $\alpha$ \& $\theta$ are related with the relation $2\alpha -\theta <1$.\\
\\
($A_4$) $g^{in} \in L^1_{-2\alpha, 1}(\mathds{R}_{+})$.\\

Next, the main result of this paper is provided.
\begin{thm}\label{TheoremCCBE}
Assume that $(A_1)$--$(A_4)$ hold. Let $g_n$ be the solution to \eqref{CCBETRUN}(defined later on) for $n \ge 1$. Then there exists a subsequence $(g_{n_k})$ of $(g_{n})$ and a mass conserving solution $g$ to \eqref{CCBE}--\eqref{Initialdata} such that
 \begin{align}\label{TheoremEquation}
 g_{n_k}\to g \ \  \text{in}\ \mathcal{C}([0,T]^w; L_{-\alpha, 1}^1(\mathds{R}_{+} ) )\ \text{for each}\ T>0
 \end{align}
 and $g$ satisfying the weak formulation
 \begin{align}\label{definition}
\int_0^{\infty} \{ g(v, t) & - g^{in}(v) \} h(v)dv = \frac{1}{2}\int_0^t \int_0^{\infty} \int_{0}^{\infty} \tilde{h}(v, v') E(v, v') \varphi(v, v') g(v, s) g(v', s)dv' dv ds\nonumber\\
&+\frac{1}{2} \int_0^t \int_0^{\infty} \int_0^{\infty} \Pi_{h}(v', v'') E_1(v', v'')  \varphi(v', v'')  g(v', s) g(v'', s) dv'' dv' ds,
\end{align}
where
\begin{align}\label{Identity1}
\tilde{h} (v, v')=h(v+v')-h(v)-h(v')
\end{align}
and
\begin{align}\label{Identity2}
 \Pi_{h}(v', v'') =\int_0^{v'+v''}  h(v) P(v|v'; v'')dv-h(v')-h(v''),
\end{align}
for every $t \in [0, T]$ and $ h  \in L^{\infty}(\mathds{R}_{+})$. Here, $\mathcal{C}( [0,T]^w;  L^1_{-\alpha, 1}(\mathds{R}_{+}) )$ denotes the space of all weakly continuous functions from $[0, T]$ to $L_{-\alpha, 1}^1(\mathds{R}_{+})$. In addition, a sequence $(g_n)$ converges to $g$ in $\mathcal{C}( [0,T]^w;  L^1_{-\alpha, 1}(\mathds{R}_{+}))$ if
\begin{align}
\lim_{n \to \infty} \sup_{t \in [0,T]}\bigg|\int_0^{\infty}(v^{-\alpha}+v)[g_n(v, t)-g(v, t)] h(v) dv \bigg|=0,
\end{align}
for every $h \in L^{\infty}(\mathds{R}_{+})$.
\end{thm}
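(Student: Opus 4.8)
The plan is to run a weak $L^1$ compactness argument in the spirit of the Dunford--Pettis theorem, combined with a time-equicontinuity estimate and a final no-gelation step to secure mass conservation. First I would collect uniform a priori bounds on the approximating sequence $(g_n)$. For the total mass, the conservative truncation gives $\mathcal{N}_1(g_n(t)) = \mathcal{N}_1^{in}$ directly, while the non-conservative truncation yields at least $\mathcal{N}_1(g_n(t)) \le \mathcal{N}_1^{in}$; the singular moment $\int_0^\infty v^{-2\alpha} g_n(v,t)\,dv$ is where assumptions $(A_2)$ and $(A_3)$ enter. Testing the truncated equation against $v^{-2\alpha}$ and using $(A_3)$ to bound the gain from $\mathcal{B}_B$, the net contribution of the coagulation and breakage terms to the singular moment is controlled precisely because the lower bound on $E$ in $(A_2)$ is the threshold $\frac{\eta(2\alpha)-2}{\eta(2\alpha)-1}$; this is what keeps $\int_0^\infty v^{-2\alpha} g_n\,dv$ bounded uniformly in $n$ and $t \in [0,T]$, so that $(g_n)$ is bounded in $L^1_{-2\alpha,1}(\mathds{R}_{+})$.

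Next I would upgrade boundedness to equi-integrability. Since $g^{in} \in L^1_{-2\alpha,1}(\mathds{R}_{+})$, the refined de la Vall\'ee Poussin theorem furnishes a nonnegative convex function $\Phi$ with $\Phi(0)=0$, $\Phi(r)/r \to \infty$ as $r \to \infty$, and $\Phi'$ concave, such that $\int_0^\infty (v^{-2\alpha}+v)\Phi(g^{in}(v))\,dv < \infty$. Propagating this functional along the truncated flow---using the convexity inequalities for $\Phi$ established in the preliminary results---I would show that $\sup_{t \in [0,T]} \int_0^\infty (v^{-2\alpha}+v)\Phi(g_n(v,t))\,dv$ stays finite uniformly in $n$. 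This gives equi-integrability of the family $\{(v^{-2\alpha}+v) g_n(\cdot,t)\}$, and together with the moment bound of the first step it yields, via Dunford--Pettis, relative weak sequential compactness in $L^1(\mathds{R}_{+})$ for each fixed $t$.

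To pass from pointwise-in-time weak compactness to compactness in $\mathcal{C}([0,T]^w; L^1_{-\alpha,1}(\mathds{R}_{+}))$, I would establish time equicontinuity: for each $h \in L^\infty(\mathds{R}_{+})$ the map $t \mapsto \int_0^\infty (v^{-\alpha}+v) g_n(v,t) h(v)\,dv$ is equicontinuous, uniformly in $n$. The requisite control on the time increments comes from inserting the kernel estimate $(A_1)$ into the three collision integrals and splitting the $(v,v')$-plane into a region near the origin, where the singularity $(v+v')^{-\alpha}$ is absorbed by the $v^{-2\alpha}$ weight, and a far region, where the factor $(1+v+v')$ is absorbed by the mass weight $v$. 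A version of the Arzel\`a--Ascoli theorem for weakly continuous $L^1$-valued functions then extracts the subsequence $(g_{n_k})$ converging to some $g$ in $\mathcal{C}([0,T]^w; L^1_{-\alpha,1}(\mathds{R}_{+}))$, which is \eqref{TheoremEquation}.

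The remaining two tasks are the limit passage in the weak formulation and mass conservation, the latter being the main obstacle. Limit passage in \eqref{definition} proceeds term by term: the equi-integrability controls the quadratic nonlinearities $\varphi\, g\, g$, while the singularity at the origin is handled by approximating the test-function combinations $\tilde h$ and $\Pi_h$ and using the uniform singular-moment bound to discard the contribution of a shrinking neighbourhood of the origin. For mass conservation the hard part is to show that no mass escapes to infinity when passing from the possibly non-conservative approximation to the limit. I would test the equation against the truncated weight $v \wedge R$, estimate the resulting mass defect, and show it tends to zero as $R \to \infty$, uniformly in the approximation parameter. Here the restriction $\alpha < \tfrac{1}{2}$ in $(A_1)$ is decisive: it supplies exactly the integrability margin that rules out gelation and forces $\mathcal{N}_1(g(t)) = \mathcal{N}_1^{in}$ for all $t \in [0,T]$, thereby giving a positive answer to the question of whether the non-conservative scheme can produce a mass-conserving solution.
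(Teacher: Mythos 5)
Your overall skeleton --- the singular-moment bound via $(A_2)$--$(A_3)$, a de la Vall\'ee Poussin equi-integrability argument, weak time-equicontinuity, Dunford--Pettis combined with a weak Arzel\`a--Ascoli variant, and then term-by-term limit passage for both $\tau=0$ and $\tau=1$ --- is the same as the paper's, and your first and third steps match Lemma \ref{B(T)} and Lemma \ref{Equicontinuityweaksense} closely, including the role of the threshold $\frac{\eta(2\alpha)-2}{\eta(2\alpha)-1}$. But there is a genuine gap: you extract only \emph{one} convex function $\Phi$, applied to the values of $g^{in}$, whereas the argument needs a second de la Vall\'ee Poussin function applied to the \emph{volume variable}. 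The paper propagates $\sup_{t\in[0,T]}\int_0^n \Gamma_1(v)\,g_n(v,t)\,dv \le \mathcal{G}(T)$ with $\Gamma_1(v)/v \to \infty$ (Lemma \ref{MassConCoagMulti}, starting from $\Upsilon_1=\int_0^\infty\Gamma_1(v)g^{in}(v)\,dv<\infty$). Without this superlinear moment you have no uniform tightness at large volumes: your bound on $\int_0^\infty (v^{-2\alpha}+v)\Phi(g_n(v,t))\,dv$ controls concentration on sets of small Lebesgue measure, but on the infinite-measure domain $\mathds{R}_{+}$ weak $L^1$ compactness also requires $\sup_n \int_{v>R}(v^{-2\alpha}+v)\,g_n(v,t)\,dv \to 0$ as $R\to\infty$, and a pointwise small, spread-out $g_n$ at large $v$ can carry order-one mass while $\Phi(g_n)$ is negligible there. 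This tightness is exactly what upgrades the convergence from $\mathcal{C}([0,T]^w;L^1_{-\alpha}(0,\lambda))$ to the full space $\mathcal{C}([0,T]^w;L^1_{-\alpha,1}(\mathds{R}_{+}))$ (the paper's passage from \eqref{weakconvergence1} to \eqref{weakconvergence2}), and it is also what makes your $v\wedge R$ defect estimate vanish uniformly in the approximation parameter; without it that step fails.

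Relatedly, your closing claim that the restriction $\alpha<\tfrac12$ ``supplies exactly the integrability margin that rules out gelation'' is a misattribution. The exponent $\alpha$ governs the singularity of $\varphi$ at the \emph{origin}: it is what makes the $v^{-2\alpha}$-moment machinery (and the constraint $2\alpha-\theta<1$ in $(A_3)$) close at small volumes. The absence of gelation is instead a large-volume phenomenon, and here it comes from the at-most-linear growth $(1+v+v')$ of the kernel in $(A_1)$ together with the propagated superlinear moment $\Gamma_1$. Moreover, for the non-conservative truncation $\tau=0$ the mass identity \eqref{Barikprop} contains an explicit loss term through the truncation boundary, and mass conservation of the limit requires showing that this cumulative flux tends to zero as $n\to\infty$; the paper controls precisely this flux by the companion estimate \eqref{C(T1)}, again an output of the $\Gamma_1$-moment lemma, following \cite{Filbet:2004Mass, Barik:2017Anote}. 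So the missing idea is concretely Lemma \ref{MassConCoagMulti}: apply the refined de la Vall\'ee Poussin theorem a second time, to the finite measure $g^{in}(v)\,dv$ in the volume variable, and propagate both \eqref{C(T)} and \eqref{C(T1)} along the truncated flow; the rest of your outline then goes through essentially as in the paper.
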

Next, let us define a particular class of convex functions $\mathcal{C}_{VP, \infty}$. Let $\Gamma_1, \Gamma_2  \in \mathcal{C}^{\infty}([0, \infty))$ be two non-negative and convex functions, then $\Gamma_1, \Gamma_2  \in \mathcal{C}_{VP, \infty} $ if
\begin{description}
  \item[(a)] $\Gamma_l(0)=\Gamma_l'(0)=0$ and $\Gamma_l'$ is concave;
  \item[(b)] $\lim_{s \to \infty} \Gamma_l'(s) =\lim_{s \to \infty} \frac{ \Gamma_l(s)}{s}=\infty $;
  \item[(c)] for $\gamma \in (1, 2)$,
  \begin{align*}
  S_{\gamma}(\Gamma_l):= \sup_{s \ge 0} \bigg\{   \frac{\Gamma_l( s )}{s^{\gamma}} \bigg\} < \infty.
  \end{align*}
 for $l=1, 2$.
\end{description}

Since $g^{in}\in L_{-2\alpha, 1}^1(\mathds{R}_{+})$, then a refined version of de la Vall\'{e}e-Poussin theorem, see  \cite[Theorem~2.8]{Laurencot:2015}, ensures that there exist two non-negative functions $\Gamma_1$ and $\Gamma_2$ in $\mathcal{C}_{VP, \infty}$ with
\begin{align}\label{convexp1}
\Gamma_l(0)=0,~~~\lim_{q \to {\infty}}\frac{\Gamma_l(q)}{q}=\infty,~~~~l=1,2
\end{align}
and
\begin{align}\label{convexp2}
\Upsilon_1 := \int_0^{\infty} \Gamma_1(v)g^{in}(v)dv<\infty,~~\text{and}~\Upsilon_2 :=\int_0^{\infty}{\Gamma_2(v^{-\alpha}g^{in}(v))}dv <\infty.
\end{align}
Finally, some additional properties of $\mathcal{C}_{VP, \infty}$ which are also required to prove Theorem \ref{TheoremCCBE} are discussed.
\begin{lem} Consider $\Gamma_1$, $\Gamma_2$ in $\mathcal{C}_{VP, \infty}$. Then we have the following results
\begin{equation}\label{convexp3}
\hspace{-5cm} \Gamma_2(q_1)\le q_1\Gamma'_2(q_1)\le 2\Gamma_2(q_1),
\end{equation}
\begin{equation}\label{convexp4}
\hspace{-5.5cm} q_1\Gamma_2'(q_2)\le \Gamma_2(q_1)+\Gamma_2(q_2),
\end{equation}
and
\begin{equation}\label{convexp5}
0 \le \Gamma_1(q_1+q_2)-\Gamma_1(q_1)-\Gamma_1(q_2)\le  2\frac{q_1\Gamma_1(q_2)+q_2\Gamma_1(q_1)}{q_1+q_2},
\end{equation}
 for all $q_1, q_2 \in \mathds{R}_{+}$.
\end{lem}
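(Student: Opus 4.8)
The plan is to prove the three displayed inequalities in turn, using only the structural properties (a) of the class $\mathcal{C}_{VP, \infty}$: each $\Gamma_l$ is convex (so $\Gamma_l'$ is nondecreasing) with $\Gamma_l(0)=\Gamma_l'(0)=0$, and $\Gamma_l'$ is concave. I first note that, although \eqref{convexp3} and \eqref{convexp4} are phrased for $\Gamma_2$, their proofs invoke only these properties and therefore apply verbatim to $\Gamma_1$ as well; this is what will let me use the analogue of \eqref{convexp3} for $\Gamma_1$ when I reach \eqref{convexp5}.

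For \eqref{convexp3} I would start from $\Gamma_2(q_1)=\int_0^{q_1}\Gamma_2'(s)\,ds$. The lower bound $\Gamma_2(q_1)\le q_1\Gamma_2'(q_1)$ follows because $\Gamma_2'$ is nondecreasing, whence $\Gamma_2'(s)\le\Gamma_2'(q_1)$ on $[0,q_1]$. For the upper bound $q_1\Gamma_2'(q_1)\le 2\Gamma_2(q_1)$ I would exploit the concavity of $\Gamma_2'$ together with $\Gamma_2'(0)=0$: writing $s=(s/q_1)\,q_1+(1-s/q_1)\cdot 0$ gives $\Gamma_2'(s)\ge (s/q_1)\Gamma_2'(q_1)$, and integrating over $[0,q_1]$ yields $\Gamma_2(q_1)\ge q_1\Gamma_2'(q_1)/2$. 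For \eqref{convexp4} I would use the support-line inequality for the convex function $\Gamma_2$ at the point $q_2$, namely $\Gamma_2(q_1)\ge\Gamma_2(q_2)+\Gamma_2'(q_2)(q_1-q_2)$; rearranging it to $q_1\Gamma_2'(q_2)\le\Gamma_2(q_1)-\Gamma_2(q_2)+q_2\Gamma_2'(q_2)$ and then substituting the bound $q_2\Gamma_2'(q_2)\le 2\Gamma_2(q_2)$ from \eqref{convexp3} collapses the right-hand side to $\Gamma_2(q_1)+\Gamma_2(q_2)$.

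The inequality \eqref{convexp5} is where the work lies. The left (superadditivity) bound is immediate from convexity and $\Gamma_1(0)=0$, since $\Gamma_1(q_i)\le\frac{q_i}{q_1+q_2}\Gamma_1(q_1+q_2)$ for $i=1,2$, and summing the two gives $\Gamma_1(q_1)+\Gamma_1(q_2)\le\Gamma_1(q_1+q_2)$. For the upper bound I set $A:=\Gamma_1(q_1+q_2)-\Gamma_1(q_1)-\Gamma_1(q_2)$ and use the identity $A=\int_0^{q_1}\big[\Gamma_1'(q_2+t)-\Gamma_1'(t)\big]\,dt$. Since $\Gamma_1'$ is concave, the map $t\mapsto\Gamma_1'(q_2+t)-\Gamma_1'(t)$ is nonincreasing, so it is bounded by its value at $t=0$, i.e. by $\Gamma_1'(q_2)-\Gamma_1'(0)=\Gamma_1'(q_2)$; this gives $A\le q_1\Gamma_1'(q_2)$, and the symmetric computation gives $A\le q_2\Gamma_1'(q_1)$. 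The decisive step is then to split $A=\frac{q_1}{q_1+q_2}A+\frac{q_2}{q_1+q_2}A$, applying $A\le q_2\Gamma_1'(q_1)$ to the first summand and $A\le q_1\Gamma_1'(q_2)$ to the second, which produces $A\le\frac{q_1 q_2}{q_1+q_2}\big[\Gamma_1'(q_1)+\Gamma_1'(q_2)\big]$.

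Finally I would insert the analogue of \eqref{convexp3} for $\Gamma_1$, namely $q\,\Gamma_1'(q)\le 2\Gamma_1(q)$, in the form $\Gamma_1'(q_i)\le 2\Gamma_1(q_i)/q_i$, which converts the last estimate into $\frac{2}{q_1+q_2}\big[q_2\Gamma_1(q_1)+q_1\Gamma_1(q_2)\big]$ — precisely the claimed bound. I expect the only delicate points to be the justification that the increment $t\mapsto\Gamma_1'(q_2+t)-\Gamma_1'(t)$ is nonincreasing (a direct consequence of $\Gamma_1'$ being nonincreasing in slope, i.e. of the concavity of $\Gamma_1'$) and the convex weighting in the splitting of $A$; the remaining manipulations are routine.
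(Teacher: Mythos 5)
Your proof is correct and complete, and it follows the standard route: the paper itself does not write out a proof but simply defers to \cite{Barik:2017Anote, Barik:2018Mass, Laurencot:2018}, where the argument is precisely the one you give (integral representation of $\Gamma_l$, monotonicity of $\Gamma_l'$ for the lower bounds, concavity of $\Gamma_l'$ with $\Gamma_l'(0)=0$ for $q\Gamma_l'(q)\le 2\Gamma_l(q)$, the support-line/Young inequality for \eqref{convexp4}, and the increment identity $A=\int_0^{q_1}[\Gamma_1'(q_2+t)-\Gamma_1'(t)]\,dt$ with the weighted splitting for \eqref{convexp5}). Your observation that \eqref{convexp3} transfers verbatim to $\Gamma_1$ is the right justification for the final substitution $\Gamma_1'(q_i)\le 2\Gamma_1(q_i)/q_i$, so nothing is missing.
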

\begin{proof}
This lemma can easily be proved in a similar way as given in \cite{Barik:2017Anote, Barik:2018Mass, Laurencot:2018}.
\end{proof}

\section{Existence of weak solutions}
This section deals with the construction of mass conserving solution for the conservative and non-conservative truncations to \eqref{CCBE}--\eqref{Initialdata}. It is expected that a mass conserving solution can be obtained for the conservative approximation under some restricted kernels. Moreover, considering a non-conservative form of coagulation and a conservative approximation of multiple fragmentation is appropriate to study the gelation transition. Therefore, an obvious question arises whether such coupling of mixed approximations will provide a mass-conserving solution or not to \eqref{CCBE}--\eqref{Initialdata}? Interestingly, the answer of this question is yes and we provide the proof of this result in this work.\\

Let us define here both the conservative and non-conservation approximations to \eqref{CCBE}--\eqref{Initialdata}.
For a given natural number $n\in\mathds{N}$, we set
\begin{align}\label{Initialtrunc}
g_n^{in}(v)=g^{in}(v) \mathds{1}_{(0, n)}(v),
\end{align}
and for  $\tau \in \{0, 1\}$,
\begin{equation}\label{CoagKerTrun}
\varphi_n^\tau (v, v') := \varphi(v, v')  \mathds{1}_{(1/n, n)}(v)  \mathds{1}_{(1/n, n)}(v') \left\{ 1 - \tau + \tau  \mathds{1}_{(0, n)}(v+v') \right\}.
\end{equation}
Using \eqref{Initialtrunc} and \eqref{CoagKerTrun}, the equations \eqref{CCBE}--\eqref{Initialdata} can be rewritten as
\begin{align}\label{CCBETRUN}
\frac{\partial g_n}{\partial t}  =\mathcal{C}_B^n(g_n)-\mathcal{CB}_D^n(g_n)+\mathcal{B}_B^n(g_n),
\end{align}
with initial data
\begin{align}\label{Initialdata1}
g_n(v, 0) = g_n^{in}(v)\ge 0\ \ \mbox{a.e.},
\end{align}
where
\begin{eqnarray*}\label{C1trun}
\mathcal{C}_B^n (g_n)(v, t) := \frac{1}{2}\int_0^v E(v-v', v' ) \varphi_n^{\tau} (v-v', v')g_n(v-v', t) g_n(v', t)dv',
\end{eqnarray*}
 \begin{align*}
\mathcal{B}_B^n (g_n)(v, t) :=  & \frac{1}{2} \int_{v}^{n} \int_{0}^{v'}P(v|v'-v''; v'')  E_1(v'-v'', v'')\} \nonumber\\
 &~~~~~~~~~~~~~~~\times \varphi_n^{\tau}(v'-v'', v'') g_n(v'-v'', t) g_n(v'', t)dv'' dv',
\end{align*}
and
\begin{eqnarray*}\label{B2}
\mathcal{CB}_D^n(g_n)(v, t) : = \int_{0}^{n-\tau v}  \varphi_n^{\tau} (v, v') g_n(v, t) g_n(v', t) dv'.
\end{eqnarray*}
The additional variable $\tau \in \{0, 1\}$ permits us to handle simultaneously the conservative approximation $(\tau = 1)$ and non-conservative approximation $(\tau = 0)$.\\

Now, in the following Proposition, the result on the positivity of the unique solution is summarized followed by a Remark on conservative and non-conservative approximation.
\begin{prop}\label{propBarik}
Let $\tau\in \{0,1\}$ and $n\ge 1$. Then, there exists a unique non-negative solution $g_n\in \mathcal{C}^1([0,\infty);L^1(0,n))$ to \eqref{CCBETRUN}--\eqref{Initialdata1}. In addition, it satisfies
\begin{align}\label{Barikprop}
\int_0^n  v g_n(v, t) dv  = &\int_0^n v g^{in}_n(v) dv \nonumber\\
&  - (1-\tau) \int_0^t \int_0^n \int_{n- v}^n v E(v, v') \varphi_n^{\tau}(v, v') g_n(v, s)g_n(v', s)dv' dv ds
\end{align}
for $t\ge 0$.
\end{prop}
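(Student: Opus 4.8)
The plan is to regard \eqref{CCBETRUN}--\eqref{Initialdata1} as an ordinary differential equation in the Banach space $L^1(0,n)$, the point being that the cut-offs in \eqref{CoagKerTrun} render the collision kernel bounded. Indeed, on the support $(1/n,n)^2$ of $\varphi_n^\tau$ one has $v+v'\ge 2/n$, so $(A_1)$ gives $\varphi_n^\tau(v,v')\le K_n$ for a constant $K_n$ depending only on $n$, $k$ and $\alpha$. Writing the right-hand side of \eqref{CCBETRUN} as $\mathcal{R}^n(g):=\mathcal{C}_B^n(g)-\mathcal{CB}_D^n(g)+\mathcal{B}_B^n(g)$, the elementary estimates $\|\mathcal{C}_B^n(g)\|_{L^1}\le \tfrac12 K_n\|g\|_{L^1}^2$, $\|\mathcal{CB}_D^n(g)\|_{L^1}\le K_n\|g\|_{L^1}^2$ and $\|\mathcal{B}_B^n(g)\|_{L^1}\le \tfrac12 T_N K_n\|g\|_{L^1}^2$ (the last one using \eqref{TNP}) show that $\mathcal{R}^n\colon L^1(0,n)\to L^1(0,n)$ is a bounded quadratic map, hence Lipschitz continuous on bounded sets.

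To obtain existence, uniqueness and non-negativity in one stroke, I would recast the problem in Duhamel form by treating the loss term as a linear absorption. With $A[g](v,t):=\int_0^{n-\tau v}\varphi_n^\tau(v,v')g(v',t)\,dv'$ and $Q_+[g]:=\mathcal{C}_B^n(g)+\mathcal{B}_B^n(g)$, a non-negative $g$ solves \eqref{CCBETRUN}--\eqref{Initialdata1} if and only if it is a fixed point of
\[
(\mathcal{T}g)(v,t)=g_n^{in}(v)\,e^{-\int_0^t A[g](v,\sigma)\,d\sigma}+\int_0^t e^{-\int_s^t A[g](v,\sigma)\,d\sigma}\,Q_+[g](v,s)\,ds .
\]
Because $A[g]\ge 0$ and $Q_+[g]\ge 0$ whenever $g\ge 0$, the map $\mathcal{T}$ leaves the non-negative cone invariant, and the $L^\infty$ bound on $\varphi_n^\tau$ makes $\mathcal{T}$ a contraction on a small ball of $\mathcal{C}([0,T_0];L^1(0,n))$; the Banach fixed point theorem then gives a unique non-negative local solution, which is $\mathcal{C}^1$ in time since $\mathcal{R}^n$ is locally Lipschitz continuous.

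For \eqref{Barikprop} I would multiply \eqref{CCBETRUN} by $v$, integrate over $(0,n)$ and apply Fubini's theorem on the maximal interval of existence. The substitution $(v-v',v')\mapsto(x,y)$ in $\int_0^n v\,\mathcal{C}_B^n\,dv$ turns the coagulation gain into $\int\!\!\int_{\{x+y<n\}} x\,E(x,y)\varphi_n^\tau(x,y)g_n g_n$, while in $\int_0^n v\,\mathcal{B}_B^n\,dv$ one first exchanges the $v$- and $v'$-integrations, evaluates the inner moment $\int_0^{v'} v\,P(v|v'-v'';v'')\,dv=v'$ by the mass-conservation property \eqref{MCP}, and then substitutes $(v'-v'',v'')\mapsto(x,y)$ to obtain $\int\!\!\int_{\{x+y<n\}} x\,E_1(x,y)\varphi_n^\tau g_n g_n$. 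Adding these and using $E+E_1=1$ together with the symmetry of $\varphi$, the two gain terms match the loss term $\int_0^n v\,\mathcal{CB}_D^n\,dv$ exactly on $\{x+y<n\}$; the breakage contributions cancel identically by \eqref{MCP}, so the only residual is the coagulation mass carried by aggregates with $v+v'\ge n$, which are absent from the gain precisely when $\tau=0$. This residual is the term in \eqref{Barikprop}.

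Finally, \eqref{Barikprop} shows that $t\mapsto\int_0^n v g_n(v,t)\,dv$ is non-increasing, hence bounded by $\mathcal{N}_1^{in}$ on the whole interval of existence. Since $\varphi_n^\tau$ is supported in $(1/n,n)^2$, the lower cut-off gives $\int_{1/n}^n g_n\,dv\le n\int_{1/n}^n v g_n\,dv\le n\,\mathcal{N}_1^{in}$, so the total collision rate $\int\!\!\int \varphi_n^\tau g_n g_n$ is bounded by a constant $C_n$; integrating \eqref{CCBETRUN} over $(0,n)$ and using \eqref{TNP} then yields $\frac{d}{dt}\|g_n(t)\|_{L^1}\le C_n$, a bound linear in $t$. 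This precludes finite-time blow-up of the $L^1$ norm, so the continuation criterion extends the solution to $[0,\infty)$. I expect the main obstacle to be the bookkeeping in the mass balance: the coagulation and breakage truncations must be tracked carefully so that all interior contributions cancel and only the $\tau$-dependent overflow survives. A second delicate point is taming the quadratic nonlinearity in the global step, where Gronwall's lemma alone fails and the mass bound together with the cut-off $v>1/n$ must be invoked.
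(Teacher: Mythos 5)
Your existence/uniqueness/global-extension skeleton --- the cutoffs in \eqref{CoagKerTrun} making $\varphi_n^\tau$ bounded on $(1/n,n)^2$, the equation viewed as an ODE in $L^1(0,n)$ with a quadratic, locally Lipschitz right-hand side, non-negativity via the Duhamel reformulation with the loss term as absorption, Banach's fixed point theorem, and continuation to $[0,\infty)$ from the non-increasing mass combined with $\mathds{1}_{(1/n,n)}(v)\le nv$ --- is sound and is essentially the standard route; the paper itself proves Proposition \ref{propBarik} only by citing \cite[Proposition~4.1]{Barik:2018Weak}, which follows the same scheme, so on that part there is nothing to object to.

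The gap is in the mass balance, at exactly the point the identity \eqref{Barikprop} is about. For $\tau=0$ the breakage contributions do \emph{not} cancel identically, contrary to your assertion. In $\mathcal{B}_B^n$ the pair-mass variable $v'$ is integrated only up to $n$, so collisions with $v+v'\in[n,2n)$ --- which are permitted by the cutoffs, since one only needs $v,v'\in(1/n,n)$ --- contribute no fragments to the truncated gain term; yet the death term $\mathcal{CB}_D^n$, whose upper limit is $n-\tau v=n$ when $\tau=0$, counts these collisions in full, including their $E_1$-portion. Carrying out your substitutions honestly, the gains match the loss only on $\{v+v'<n\}$, and the residual is the loss over $\{v+v'\ge n\}$ with the \emph{whole} kernel: you obtain \eqref{Barikprop} with $\varphi_n^\tau$ in place of $E\,\varphi_n^\tau$, i.e.\ with the additional non-positive term $-\tfrac12\iint_{\{v+v'\ge n\}}(v+v'')\,E_1(v,v')\,\varphi_n^{0}(v,v')\,g_n(v,s)\,g_n(v',s)\,dv'\,dv$ (with $v''$ read as $v'$). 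Equivalently, testing the paper's own weak formulation with $h(v)=v$ and using \eqref{MCP} in \eqref{PIZ} gives $\Pi_{h,0}(v',v'')=-(v'+v'')\mathds{1}_{[n,\infty)}(v'+v'')$, which is not zero. So the sentence ``the breakage contributions cancel identically by \eqref{MCP}'' fails for the operators as defined; the cancellation would require the $E_1$-part of the death term to carry the cutoff $\mathds{1}_{(0,n)}(v+v')$ as well, i.e.\ a fully conservative treatment of breakage, which is what the paper's announced philosophy (non-conservative coagulation, conservative breakage) and \eqref{Barikprop} tacitly presuppose. This mismatch is arguably inherited from the paper's displayed formulas, but since \eqref{Barikprop} is precisely the statement to be proved, you must either modify $\mathcal{CB}_D^n$ accordingly or record the extra $E_1$ overflow term. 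Fortunately your later steps are robust to this: in either version the residual is non-positive, so the bound $\int_0^n v\,g_n(v,t)\,dv\le\mathcal{N}_1^{in}$ and your global continuation argument go through unchanged.
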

\begin{proof}
The proof of the Proposition \ref{propBarik} is similar to \cite[Proposition 4.1]{Barik:2018Weak}.
\end{proof}

\begin{rmkk}
It should be mentioned here that the last term on the right-hand side of \eqref{Barikprop} leads to the case of total mass conservation for $\tau =1$ and therefore known as the conservative approximation. While for $\tau =0$, the truncative form becomes non-conservative as the total mass decreases with respect to time. Moreover, in both these cases, it is clear that
\begin{align}\label{Barikprop1}
\int_0^n  v  g_n(v, t)dv  \le \int_0^n v g^{in}_n(v) dv,\ \ \ \text{for}\ \ t > 0.
\end{align}
\end{rmkk}

Further, the weak formulation to \eqref{CCBETRUN}--\eqref{Initialdata1} for $n\ge 1$ and $h \in L^{\infty}(\mathbb{R}_{+})$ can be obtained as
\begin{align}\label{nctp3}
& \int_0^n \{ g_n (v, t)-g_n^{in}(v)\} h(v)dv \nonumber\\
= & \frac{1}{2} \int_0^t\int_0^n \int_0^n \tilde{h}_{\tau}(v, v') \mathds{1}_{(1/n, n)}(v) \mathds{1}_{(1/n, n)}(v') E(v, v') \varphi(v, v') g_n(v, s)g_n(v', s)dv' dv ds \nonumber\\
 +&\frac{1}{2} \int_0^t \int_0^n\int_0^{n} \Pi_{h, \tau}(v', v'')  \mathds{1}_{(1/n, n)}(v') \mathds{1}_{(1/n, n)}(v'') E_1(v', v'') \nonumber\\
& ~~~~~~~~~\times  \varphi(v', v'') g_n(v', s)g_n(v'', s) dv'' dv' ds,
\end{align}
where
\begin{align}\label{G Omega}
 \tilde{h}_{\tau}(v, v')=  h(v+v')  \mathds{1}_{(0, n)}(v+v')- \{h(v)+h(v') \} (1-\tau +\tau  \mathds{1}_{(0, n)}(v+v'))
 \end{align}
and
\begin{align}\label{PIZ}
\Pi_{h, \tau}(v', v'')= & \mathds{1}_{(0, n)}(v'+v'') \int_0^{v'+v''} h(v) P(v|v'; v'')dv''\nonumber\\
& - \{h(v')+h(v'')\} (1-\tau +\tau  \mathds{1}_{(0, n)}(v'+v'')).
\end{align}

Finally, we claim and later prove that the family of solutions $\{ g_n\}_{n \ge 1}$ is relatively compact in $\mathcal{C}([0,T]^w; L_{-\alpha, 1}^1(\mathds{R}_{+} ) )$. For this purpose, the weak $L^1$ compactness method is applied which is also used in the pioneering work of Stewart \cite{Stewart:1989} and Lauren\c{c}ot et. al. \cite{Laurencot:2002}. To proceed further, the uniform boundedness of the family of solutions $\{ g_n\}_{n \ge 1}$ in space $L_{-2\alpha, 1}^1(\mathds{R}_{+})$ is shown in the next section.

\subsection{Uniform Bound}
\begin{lem}\label{B(T)}
Let us assume $(A_1)$--$(A_4)$ hold. Let $T>0$. Then there exists a positive constant $ \mathcal{B}(T)$ depending on $T$ such that
\begin{align*}
\int_0^{n} (v^{-2\alpha}+v)g_n(v, t)dv \le \mathcal{B}(T)\ \ \text{for all}\ t \in[0, T].
\end{align*}
\end{lem}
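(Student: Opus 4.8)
The plan is to split the weighted integral into its two natural pieces, writing $\mathcal{M}_{-2\alpha}(t):=\int_0^n v^{-2\alpha}g_n(v,t)\,dv$ and $\mathcal{M}_1(t):=\int_0^n v\,g_n(v,t)\,dv$. The moment $\mathcal{M}_1$ requires no work: by the Remark following Proposition~\ref{propBarik}, inequality \eqref{Barikprop1} gives $\mathcal{M}_1(t)\le\int_0^n v\,g_n^{in}(v)\,dv\le\mathcal{N}_1^{in}$ for all $t\ge0$, and $\mathcal{N}_1^{in}<\infty$ by $(A_4)$. Everything therefore reduces to a bound for $\mathcal{M}_{-2\alpha}$ on $[0,T]$ that is uniform in $n$.

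For $\mathcal{M}_{-2\alpha}$ I would differentiate in $t$ and use the weak formulation \eqref{nctp3} with the choice $h(v)=v^{-2\alpha}$. Although this $h$ does not belong to $L^{\infty}(\mathds{R}_{+})$, the indicators $\mathds{1}_{(1/n,n)}$ built into \eqref{nctp3} confine the integrals to $v,v'>1/n$, where $v^{-2\alpha}\le n^{2\alpha}$ is bounded, so the identity is legitimate for each fixed $n$. Renaming the coagulation variables $(v,v')\to(v',v'')$, the pair $(v',v'')$ contributes to $\dot{\mathcal{M}}_{-2\alpha}$ through the common rate $\tfrac12\varphi(v',v'')g_n(v')g_n(v'')$ multiplied by the bracket
\[ E(v',v'')\big[(v'+v'')^{-2\alpha}-(v')^{-2\alpha}-(v'')^{-2\alpha}\big]+E_1(v',v'')\Big[\int_0^{v'+v''} v^{-2\alpha}P(v|v';v'')\,dv-(v')^{-2\alpha}-(v'')^{-2\alpha}\Big]. \]
Using $(A_3)$ to replace the integral by $\eta(2\alpha)(v'+v'')^{-2\alpha}$, together with $E+E_1=1$, this bracket is at most
\[ [E(v',v'')+E_1(v',v'')\eta(2\alpha)]\,(v'+v'')^{-2\alpha}-(v')^{-2\alpha}-(v'')^{-2\alpha}. \]
The decisive point is that assumption $(A_2)$ is calibrated so that on $(0,1)\times(0,1)$ one has $E+E_1\eta(2\alpha)=\eta(2\alpha)-E(\eta(2\alpha)-1)\le 2$; since moreover $2(v'+v'')^{-2\alpha}\le(v')^{-2\alpha}+(v'')^{-2\alpha}$ because $-2\alpha<0$, the entire bracket is non-positive on $(0,1)^2$ and may be discarded. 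For $n\ge2$ this square lies in $\{v'+v''<2\le n\}$, where the truncated brackets $\tilde h_\tau,\Pi_{h,\tau}$ of \eqref{G Omega}--\eqref{PIZ} coincide with their untruncated forms, so the cancellation applies verbatim.

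On the complementary region $\{\max(v',v'')\ge1\}$ I would use only $E+E_1\eta(2\alpha)\le\eta(2\alpha)$ and drop the two remaining negative terms, bounding the bracket by $\eta(2\alpha)(v'+v'')^{-2\alpha}$ (this also covers $\{v'+v''\ge n\}$, where the truncated brackets are in fact $\le0$). Invoking $(A_1)$ gives $\varphi(v',v'')(v'+v'')^{-2\alpha}\le k(1+v'+v'')(v'+v'')^{-3\alpha}$, and $(v'+v'')^{-3\alpha}\le1$ there since $v'+v''\ge1$. One is left to estimate $\iint_{\max\ge1}(1+v'+v'')g_n(v')g_n(v'')\,dv'\,dv''$, which I split into $v'\le1\le v''$ (plus its symmetric twin) and $v',v''\ge1$. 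Using $1+v'+v''\le3v''$ on the first and $1+v'+v''\le3v'v''$ on the second, each resulting factor is either the bounded mass $\int_1^n v\,g_n\le\mathcal{N}_1^{in}$ or $\int_0^1 g_n\le\int_0^1 v^{-2\alpha}g_n\le\mathcal{M}_{-2\alpha}$. This produces the \emph{linear} differential inequality $\dot{\mathcal{M}}_{-2\alpha}(t)\le a\,\mathcal{M}_{-2\alpha}(t)+b$ with $a,b$ depending only on $k$, $\eta(2\alpha)$ and $\mathcal{N}_1^{in}$. Since $\mathcal{M}_{-2\alpha}(0)\le\int_0^{\infty}v^{-2\alpha}g^{in}(v)\,dv<\infty$ by $(A_4)$, Gronwall's lemma yields a bound on $[0,T]$ independent of $n$, and adding back $\mathcal{M}_1\le\mathcal{N}_1^{in}$ gives the constant $\mathcal{B}(T)$.

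The main obstacle is precisely the region $(0,1)^2$: there the singular weight is largest and neither the coagulation nor the breakage contribution is individually sign-definite, so the naive estimate would yield a quadratic term in $\mathcal{M}_{-2\alpha}$ and only a local-in-time bound. Only the \emph{combination} of the two, balanced exactly by the threshold on $E$ in $(A_2)$ and the $\eta(2\alpha)$-estimate of $(A_3)$, is non-positive; recognizing this cancellation and thereby removing the quadratic contribution is what turns the estimate into a linear (hence globally integrable) one. A secondary technical point is the justification of the unbounded test function $v^{-2\alpha}$, which is handled by the boundedness of $v^{-2\alpha}$ on the truncated support, together with the observation that the modifications introduced by $\tilde h_\tau,\Pi_{h,\tau}$ relative to the untruncated differences occur only where $v'+v''\ge n$, a region already inside the controlled set $\{\max(v',v'')\ge1\}$ for $n\ge2$.
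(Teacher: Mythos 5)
Your proposal is correct and follows essentially the same route as the paper's proof: differentiate the $-2\alpha$ moment, use $(A_3)$ together with the threshold on $E$ in $(A_2)$ to make the combined coagulation--breakage contribution nonpositive on the singular region $(0,1)^2$, and control the complementary region via $(A_1)$ and the mass bound \eqref{Barikprop1} to arrive at a linear differential inequality closed by Gronwall. The only difference is bookkeeping: you symmetrize the full weak-form bracket and use $2(v'+v'')^{-2\alpha}\le v'^{-2\alpha}+v''^{-2\alpha}$, whereas the paper splits the loss term over subregions such as $(0,1)\times(0,1-v)$ and $(0,1)\times(1-v,1)$ before invoking the same sign conditions.
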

\begin{proof}
 Let $t\in [0,T]$ and $n \ge 1$. Multiplying the equation \eqref{CCBETRUN} by $v^{-2\alpha}$ and then taking the integration from $0$ to $n$ with respect to $v$ leads to
\begin{align}\label{sEquibound2}
\frac{d}{dt}\int_0^n v^{-2\alpha} g^n(v, t)dv =&\int_0^n v^{-2\alpha}  [\mathcal{C}_B^n(g^n)(v, t)
-\mathcal{CB}_{D}^n(g^n)(v, t)+\mathcal{B}_B^n (g^n)(v, t)] dv.
\end{align}
Now, we estimate each integral on the right-hand side of (\ref{sEquibound2}), individually. The first term on the right-hand side of (\ref{sEquibound2}) can be simplified, by using Fubini's theorem, the transformation $v-v'=v_1$ $\&$ $v'=v_2$, and then replacing the variables $v_1$ by $v'$ and $v_2$ by $v$ as
\begin{align}\label{sEquibound3}
\int_0^n v^{-2\alpha}  \mathcal{C}_B^n(g^n)(v, t)dv  
 =\frac{1}{2} \int_0^n \int_{0}^{n-v}  (v+v')^{-2\alpha} E(v, v')  \varphi_n^{\tau}(v, v')g^n(v, t)g^n(v', t)dv' dv.
\end{align}
Again, applying the repeated application of Fubini's theorem to the third term on the right-hand side of (\ref{sEquibound2}), using the relation $(A_3)$, the transformation $v'-v''=v_2$ $\&$ $v''=v_3$ and finally replacing $v' \rightarrow v$ $\&$ $v'' \rightarrow v'$, we get
\begin{align}\label{sEquibound4}
\int_0^n v^{-2\alpha}  \mathcal{B}_B^n(g^n)(v, t)dv  =&\frac{1}{2}  \int_0^n  \int_{v}^{n} \int_0^{v'}  v^{-2\alpha} P(v| v'-v''; v'') E_1(v'-v''; v'') \varphi_n^{\tau} (v'-v'', v'') \nonumber\\
&~~~~~~~~~~~~\times  g^n(v'-v'', t)g^n(v'', t) dv'' dv' dv \nonumber\\
=&\frac{1}{2}  \int_0^n \int_0^{v'} \int_0^{v'} v^{-2\alpha} P(v| v'-v''; v'') E_1(v'-v''; v'')  \varphi_n^{\tau} (v'-v'', v'') \nonumber\\
&~~~~~~~~~~~\times g^n(v'-v'', t)g^n(v'', t) dv dv'' dv' \nonumber\\
=&\frac{1}{2}  \int_0^n \int_0^{n-v''} \int_0^{v'+v''} v^{-2\alpha} P(v| v'; v'') E_1(v', v'') \varphi_n^{\tau} (v', v'') \nonumber\\
&~~~~~~~~~~~\times  g^n(v', t)g^n(v'', t) dv dv' dv'' \nonumber\\
\le & \frac{\eta(2\alpha)}{2}   \int_0^n \int_0^{n-v} (v+v')^{-2\alpha} E_1(v, v') \varphi_n^{\tau}(v, v') g^n(v, t)g^n(v', t) dv' dv.
\end{align}
Further, inserting (\ref{sEquibound3}) and (\ref{sEquibound4}) into (\ref{sEquibound2}), and using Fubini's theorem and $(A_3)$,  (\ref{sEquibound2}) can be rewritten as
\begin{align}\label{sEquibound6}
\frac{d}{dt}  \int_0^n  v^{-2\alpha}  g^n (v, t) dv &
\le  - \int_0^1\int_{0}^{1-v} \bigg[1-\frac{1}{2}  E(v, v') -\frac{\eta(2\alpha)}{2} E_1(v, v') \bigg] \nonumber\\
 &~~~~~~~~~~~~~~~~~~~~~~~~~\times v^{-2\alpha} \varphi_n^{\tau}(v, v')g^n(v, t) g^n(v', t)dv' dv \nonumber\\
&- \int_0^1\int_{1-v}^{1} \bigg[1-\frac{\eta(2\alpha)}{2} E_1(v, v') \bigg] v^{-2\alpha}\varphi_n^{\tau} (v, v') g^n(v, t) g^n(v', t)dv' dv \nonumber\\
&+ \int_1^n \int_{0}^{n}  (v+v')^{-2\alpha} E(v, v') \varphi_n^{\tau}(v, v')g^n(v, t)g^n(v', t)dv' dv\nonumber\\
&- \int_0^1 \int_{1}^{n-v \tau} v^{-2\alpha} \varphi_n^{\tau} (v, v') g^n(v, t) g^n(v', t) dv' dv \nonumber\\
&- \int_1^n \int_{0}^{n-v \tau} v^{-2\alpha} \varphi_n^{\tau} (v, v') g^n(v, t) g^n(v', t) dv' dv \nonumber\\
&+ \frac{\eta(2\alpha)}{2}  \int_0^1  \int_{1}^{n} v^{-2\alpha}  E_1(v, v')  \varphi_n^{\tau} (v, v') g^n(v, t) g^n(v', t) dv' dv \nonumber\\
& + \frac{\eta(2\alpha)}{2}  \int_1^n  \int_{0}^{n}  v^{-2\alpha}  E_1(v, v')  \varphi_n^{\tau} (v, v') g^n(v, t) g^n(v', t) dv' dv.
\end{align}
Using the negativity of the first, second, fourth and fifth integrals on the right-hand side of (\ref{sEquibound6}) which are guaranteed from $(A_2)$ and Proposition \ref{propBarik}, and then further applying $(A_1)$, we obtain
\begin{align*}
\frac{d}{dt}  \int_0^n  v^{-2\alpha}  g^n (v, t) dv &
\le \int_1^n \int_{0}^{n}  (v+v')^{-2\alpha}  \varphi_n^{\tau}(v, v') g^n(v, t) g^n(v', t)dv' dv\nonumber\\
&+ \frac{\eta(2\alpha)}{2}  \int_0^1  \int_{1}^{n} v^{-2\alpha} \varphi_n^{\tau}(v, v') g^n(v, t) g^n(v', t)dv' dv \nonumber\\
& + \frac{\eta(2\alpha)}{2}  \int_1^n  \int_{0}^{n} v^{-2\alpha}   \varphi_n^{\tau}(v, v') g^n(v, t) g^n(v', t)dv' dv \nonumber\\
\le & 2k \bigg(1+ \frac{\eta(2\alpha)}{2} \bigg) \int_1^n \int_{0}^{n} v^{-2\alpha} \frac{ (1+v+v')}{(v+v')^{\alpha}}g^n(v, t)g^n(v', t)dv' dv \nonumber\\
&+ k \frac{\eta(2\alpha)}{2}   \int_0^1  \int_{1}^{n} v^{-2\alpha} \frac{ (1+v+v')}{(v+v')^{\alpha}}g^n(v, t)g^n(v', t)dv' dv \nonumber\\
\le & 2k \bigg(1+ \frac{\eta(2\alpha)}{2} \bigg)  \int_1^n \int_{0}^{n}  (v+v') g^n(v, t)g^n(v', t)dv' dv \nonumber\\
&+ 2k\ \eta(2\alpha)    \int_0^1  \int_{1}^{n} v^{-2\alpha}  v' g^n(v, t)g^n(v', t)dv' dv
\end{align*}
Finally, by using \eqref{Totalinitialmass}, the above can be further reduces to
\begin{align}\label{sEquibound7}
\frac{d}{dt}  \int_0^n  v^{-2\alpha}  g^n (v, t) dv\le 4 k \mathcal{ N}_1^{in}  (1+   \eta(2\alpha) ) \int_{0}^{n}  v^{-2\alpha} g^n(v, t) dv  + 2  k { \mathcal{N}_1^{in}}^2 (2+ \eta(2\alpha) ).
\end{align}
Now, an application of Gronwall's inequality and $(A_4)$ to (\ref{sEquibound7}) gives
\begin{eqnarray}\label{sEquibound8}
\int_0^n v^{-2\alpha} g^n(v, t) dv \le \mathcal{B}_1(T),
\end{eqnarray}
where
\begin{eqnarray*}
\mathcal{B}_1(T):=e^{aT} \|g_{\text{in}}\|_{L^1_{-2\alpha, 1}(\mathds{R}_+)} +
\frac{b }{a} ( e^{aT } -1 ),
\end{eqnarray*}
for $a:=  4 k_1  \mathcal{N}_1^{in}  (1+   \eta(2\alpha) ) $ and $b:= 2  k_1{ \mathcal{N}_1^{in}}^2 (2+ \eta(2\alpha) )$.
Hence, using (\ref{sEquibound8}) yields
 \begin{eqnarray*}
\int_0^{n} (v+v^{-2\alpha}) g^n(v, t)dv \le \mathcal{B}_1(T)+ \mathcal{N}_1^{in}:= \mathcal{B}(T),
\end{eqnarray*}
which completes the proof of Lemma \ref{B(T)}.
\end{proof}

In the following Lemma, we discuss the behaviour of $g_n$ for large volume particle $v$.
\begin{lem}\label{MassConCoagMulti}
Let $(A_1)$--$(A_4)$ hold. Then for every $n > 1$, $T>0$,
\begin{equation}\label{C(T)}
\sup_{t\in [0, T]}\int_0^n \Gamma_1 (v)g_n(v, t)dv  \le \mathcal{G}(T),
\end{equation}
and
\begin{align}\label{C(T1)}
(1-\tau)\int_0^T& \int_0^n\int_{n-v}^n \Gamma_1 (v)  \mathds{1}_{(1/n, n)}(v)  \mathds{1}_{(1/n, n)}(v')\nonumber\\
 &~~~~~~~~~~~\times \varphi(v, v')g_n(v, s)g_n(v', s) dv' dv ds\le \mathcal{G}(T),
\end{align}
where the $\Gamma_1 \in \mathcal{C}_{VP, \infty}$ satisfies \eqref{convexp1} and \eqref{convexp2} and $\mathcal{G}(T)$ (depending on $T$) is a positive constant.
\end{lem}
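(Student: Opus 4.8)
The plan is to reproduce, with the weight $v\mapsto\Gamma_1(v)$ in place of $v^{-2\alpha}$, the computation used in the proof of Lemma~\ref{B(T)}: differentiate $t\mapsto\int_0^n\Gamma_1(v)g_n(v,t)\,dv$ along \eqref{CCBETRUN} and rearrange the three integrals by Fubini's theorem and the same changes of variables employed there (this also legitimizes the unbounded weight, exactly as $v^{-2\alpha}$ was handled). This brings the coagulation contribution into the symmetric form governed by $\tilde{\Gamma}_1(v,v')=\Gamma_1(v+v')-\Gamma_1(v)-\Gamma_1(v')$ and the collisional-breakage contribution into the form governed by $\Pi_{\Gamma_1}(v',v'')=\int_0^{v'+v''}\Gamma_1(v)P(v|v';v'')\,dv-\Gamma_1(v')-\Gamma_1(v'')$. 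In each case the integrand splits into a nonnegative \emph{production} part, supported on $\{v+v'<n\}$, and a nonpositive \emph{loss} part, supported on $\{v+v'\ge n\}$ and present only when $\tau=0$.

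The first key point is that the two loss parts recombine into an efficiency-free term. The coagulation loss carries the factor $E\,\varphi$ and the breakage loss the factor $E_1\,\varphi$; since $E+E_1=1$, their sum equals $-\tfrac{1-\tau}{2}\int\!\!\int_{\{v+v'\ge n\}}(\Gamma_1(v)+\Gamma_1(v'))\varphi\,g_ng_n$, which is precisely (after discarding the nonnegative $\Gamma_1(v')$ summand and using $E\le1$) the quantity in \eqref{C(T1)}. The second key point controls the production parts. For coagulation I would use the convexity estimate \eqref{convexp5}, namely $\tilde{\Gamma}_1(v,v')\le 2\tfrac{v\Gamma_1(v')+v'\Gamma_1(v)}{v+v'}$. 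For breakage I would first establish, from the scaling inequality $\Gamma_1(v)\le\tfrac{v}{v'+v''}\Gamma_1(v'+v'')$ valid for $v\le v'+v''$ (a consequence of convexity together with $\Gamma_1(0)=0$) and the mass-conservation property \eqref{MCP} of $P$, that $\int_0^{v'+v''}\Gamma_1(v)P(v|v';v'')\,dv\le\Gamma_1(v'+v'')$; hence $\Pi_{\Gamma_1}(v',v'')\le\tilde{\Gamma}_1(v',v'')$ obeys the same bound. Using $E,E_1\le1$, the total production is then at most $\tfrac12\int\!\!\int\tilde{\Gamma}_1(v,v')\varphi\,g_ng_n$.

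It remains to bound $\int_0^n\!\!\int_0^n\tfrac{v\Gamma_1(v')+v'\Gamma_1(v)}{v+v'}\varphi(v,v')g_n g_n\,dv'dv$ via $(A_1)$, by splitting the domain into $\{v+v'\ge1\}$ and $\{v+v'<1\}$. On the first region $1+v+v'\le2(v+v')$ and $(v+v')^{-\alpha}\le1$, so the integrand is controlled by a multiple of $v'\Gamma_1(v)+v\Gamma_1(v')$, whose integral is at most $C(k)\,\mathcal{N}_1^{in}\int_0^n\Gamma_1(v)g_n\,dv$ after using $\int_0^n v'g_n\le\mathcal{N}_1^{in}$. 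On the second region $v,v'<1$, so $\Gamma_1(v)\le\Gamma_1(1)\,v$ and the singular factor satisfies $v(v+v')^{-1-\alpha}\le v^{-\alpha}\le v^{-2\alpha}$; the $v^{-2\alpha}$-moment of Lemma~\ref{B(T)} then bounds this piece by a constant multiple of $\Gamma_1(1)\,\mathcal{B}(T)\,\mathcal{N}_1^{in}$. Altogether this yields a differential inequality of the form $\tfrac{d}{dt}\int_0^n\Gamma_1 g_n\,dv+(\text{nonnegative loss})\le C_1(T)\int_0^n\Gamma_1 g_n\,dv+C_2(T)$.

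Finally, discarding the loss term and applying Gronwall's inequality together with $\Upsilon_1=\int_0^\infty\Gamma_1 g^{in}<\infty$ from \eqref{convexp2} gives \eqref{C(T)}; retaining the loss term, integrating over $[0,T]$, and inserting the bound \eqref{C(T)} just obtained gives \eqref{C(T1)}. I expect the main obstacle to be the production estimate of the third paragraph, specifically taming the origin singularity of $\varphi$, which forces the region split and the simultaneous use of the near-zero scaling of $\Gamma_1$ and the $v^{-2\alpha}$-moment of Lemma~\ref{B(T)}; the efficiency-free recombination of the loss terms and the $\tau$-bookkeeping are, by comparison, routine.
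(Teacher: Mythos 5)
Your proposal is correct and takes essentially the same route as the paper's proof: test the truncated equation with $\Gamma_1$, bound the coagulation production by \eqref{convexp5}, bound the breakage production via the monotonicity of $s\mapsto\Gamma_1(s)/s$ together with \eqref{MCP} (yielding $\int_0^{v'+v''}\Gamma_1(v)P(v|v';v'')dv\le\Gamma_1(v'+v'')$, hence $\Pi_{\Gamma_1}\le\tilde{\Gamma}_1$), retain the nonpositive $(1-\tau)$-loss terms to obtain \eqref{C(T1)}, and close with Lemma \ref{B(T)} and Gronwall's inequality. Your recombination of the losses and productions via $E+E_1=1$ and your domain split $\{v+v'\ge 1\}$ versus $\{v+v'<1\}$ are only cosmetic variants (with slightly cleaner constants) of the paper's separate $E,E_1\le 1$ bounds and its three-rectangle decomposition.
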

\begin{proof}   Choose $h(v)=\Gamma_1(v) \chi_{(0, n)}(v)$, and inserting it into (\ref{nctp3}) to obtain
\begin{align}\label{large1}
& \int_0^n \Gamma_1(v) g_n(v, t)dv =\int_0^n \Gamma_1(v)g_n^{in}(v)dv \nonumber\\
& + \frac{1}{2} \int_0^t \int_0^n \int_0^n \tilde{\Gamma}_{1, \tau}(v, v')  \mathds{1}_{(1/n, n)}(v)  \mathds{1}_{(1/n, n)}(v') E(v, v') \varphi(v, v') g_n(v, s)g_n(v', s) dv' dv ds\nonumber\\
& +\frac{1}{2} \int_0^t \int_0^n \int_0^{n} \Pi_{\Gamma_1, \tau} (v', v'')  \mathds{1}_{(1/n, n)}(v')  \mathds{1}_{(1/n, n)}(v'') E_1(v', v'') \nonumber\\
 &~~~~~~~~~~~~ \times \varphi(v', v'') g_n(v', s)g_n(v'', s)dv'' dv' ds,
\end{align}
where
\begin{align}\label{large2}
 \tilde{\Gamma}_{1, \tau}(v, v') = \Gamma_1 (v+v')  \mathds{1}_{(0, n)}(v+v')-\{\Gamma_1 (v)+\Gamma_1 (v')\}  (1-\tau +\tau  \mathds{1}_{(0, n)}(v+v'))
 \end{align}
and
\begin{align*}
\Pi_{\Gamma_1, \tau} (v', v'') = &  \mathds{1}_{(0, n)}(v'+v'') \int_0^{v'+v''} \Gamma_1(v) P(v|v';v'')dv\nonumber\\
 &-\{ \Gamma_1(v') +\Gamma_1(v'')\} (1-\tau +\tau  \mathds{1}_{(0, n)}(v'+v'')).
\end{align*}
By using \eqref{convexp2} and \eqref{Initialtrunc} into \eqref{large1}, we achieve
\begin{align}\label{large3}
\int_0^n\Gamma_1(v)g_n(v, t)dv \le &\Upsilon_1+\frac{1}{2}\int_0^t \{ S_1^n(s)+S_2^n(s)+S_3^n(s)+S_4^n(s) \} ds,
\end{align}
where
\begin{align*}
S_1^n(s)=& \int_0^n \int_0^{n-v}  \{ \Gamma_1 (v+v')-\Gamma_1 (v)-\Gamma_1 (v') \}  \mathds{1}_{(1/n, n)}(v)  \mathds{1}_{(1/n, n)}(v')E(v, v') \nonumber\\
& ~~~~~~~~~ \times  \varphi(v, v') g_n(v, s)g_n(v', s) dv' dv,\nonumber\\
S_2^n(s)=&-(1-\tau) \int_0^n \int_{n-v}^n \{\Gamma_1 (v)+\Gamma_1 (v')\}   \mathds{1}_{(1/n, n)}(v)  \mathds{1}_{(1/n, n)}(v')E(v, v') \nonumber\\
& ~~~~~~~~~ \times  \varphi(v, v') g_n(v, s)g_n(v', s) dv' dv,\nonumber\\
S_3^n(s)=& \int_0^n \int_0^{n-v'}  \bigg[\int_0^{v'+v''} \Gamma_1(v) P(v|v';v'')dv -\{ \Gamma_1(v') + \Gamma_1(v'')\}   \bigg]   \mathds{1}_{(1/n, n)}(v) \mathds{1}_{(1/n, n)}(v')\nonumber\\
& ~~~~~~~~~ \times E_1(v, v')  \varphi(v', v'') g_n(v', s)g_n(v'', s) dv'' dv',
\end{align*}
and
\begin{align*}
S_4^n(s)= &-(1-\tau )\int_0^n \int_{n-v'}^n  \{ \Gamma_1(v') + \Gamma_1(v'')\}   \mathds{1}_{(1/n, n)}(v)  \mathds{1}_{(1/n, n)}(v') E_1(v, v')\nonumber\\
  &~~~~~~~~\times \varphi(v', v'') g_n(v', s)g_n(v'', s) dv'' dv'.
\end{align*}
Now, we simplify each term separately. The part $S_1^n(s)$ is estimated by using \eqref{convexp5} and ($A_1$) as
\begin{align}\label{Pn}
S_1^n(s) \le &   \int_0^n \int_0^{n}  \{ \Gamma_1 (v+v')-\Gamma_1 (v)-\Gamma_1 (v') \}   \varphi(v, v')   \mathds{1}_{(1/n, n)}(v)  \mathds{1}_{(1/n, n)}(v')  \nonumber\\
  &~~~~~~~~~~~~~~\times g_n(v, s)g_n(v', s) dv' dv \nonumber\\
\le & 2k \int_0^1 \int_0^{1}\frac{(1+v+v')}{(v+v')^{\alpha}} \times  \frac{ v\Gamma_1(v')+ v' \Gamma_1(v)}{(v+v')} g_n(v, s)g_n(v', s) dv' dv \nonumber\\
& + 4k \int_1^n \int_0^{1} \frac{(1+v+v')}{(v+v')^{\alpha}} \times  \frac{ v\Gamma_1(v')+ v' \Gamma_1(v)}{(v+v')}  g_n(v, s)g_n(v', s) dv' dv \nonumber\\
& + 2k \int_1^n \int_1^{n}  \frac{(1+v+v')}{(v+v')^{\alpha}} \times  \frac{ v\Gamma_1(v')+ v' \Gamma_1(v)}{(v+v')} g_n(v, s)g_n(v', s) dv' dv \nonumber\\
\le & 12 k \int_0^1 \int_0^{1} v^{-2\alpha}   \frac{ v \Gamma_1(v') }{(v+v')} g_n(v, s)g_n(v', s) dv' dv  \nonumber\\
& + 12 k \int_1^n \int_0^{1}\frac{v}{(v+v')^{\alpha}} \times  \frac{ v\Gamma_1(v')+v'\Gamma_1(v)}{(v+v')} g_n(v, s)g_n(v', s) dv' dv  \nonumber\\
& + 4k \int_1^n \int_1^{n}  \frac{v\Gamma_1(v')+v'\Gamma_1(v)}{(v+v')^{\alpha}} g_n(v, s)g_n(v', s) dv' dv.
\end{align}
Let us first estimate the first term on the right-hand side of \eqref{Pn}. By using Lemma \ref{B(T)} and the monotonicity of $\Gamma_1$, it can be simplified as
\begin{align}\label{Pn1}
12 k \int_0^1 \int_0^{1} v^{-2\alpha}   \frac{v\Gamma_1(v') }{(v+v')} g_n(v, s)g_n(v', s)dv' dv
 \le 12 k \Gamma_1(1) \mathcal{B}(T)^2.
\end{align}
Again considering Lemma \ref{B(T)} and the monotonicity of $\Gamma_1$, the second term on the right-hand side of \eqref{Pn} can be evaluated as
\begin{align}\label{Pn2}
12 k \int_1^n \int_0^{1} & \frac{v}{(v+v')^{\alpha}} \times  \frac{v \Gamma_1(v')+ v' \Gamma_1(v)}{(v+v')}  g_n(v, s)g_n(v', s)dv' dv \nonumber\\
\le & 12 k \Gamma_1(1) \int_1^n \int_0^{1} v v'^{-2\alpha}  g_n(v, s)g_n(v', s)dv' dv  \nonumber\\
&+12 k \int_1^n \int_0^{1}  v'^{-2\alpha}  \Gamma_1(v) g_n(v, s)g_n(v', s)dv' dv \nonumber\\
\le & 12 k \Gamma_1(1) \mathcal{B}(T)^2 +12 k \mathcal{B}(T) \int_0^n   \Gamma_1(v) g_n(v, s) dv.
\end{align}
Finally, the last integral on the right-hand side of \eqref{Pn} is calculated by applying Lemma \ref{B(T)} as
\begin{align}\label{Pn3}
 4k \int_1^n \int_1^{n}  \frac{ v\Gamma_1(v')+v'\Gamma_1(v)}{(v+v')^{\alpha}} g_n(v, s)g_n(v', s)dv' dv
\le & 8 k \int_1^n \int_1^{n} v \Gamma_1(v')  g_n(v, s)g_n(v', s)dv' dv\nonumber\\
\le & 8 k \mathcal{B}(T) \int_0^n  \Gamma_1(v) g_n(v, s) dv.
\end{align}
Inserting \eqref{Pn1}, \eqref{Pn2} and \eqref{Pn3} into \eqref{Pn} lead to
\begin{align}\label{Pnf}
S_1^n(s) \le & 24 k \Gamma_1(1)\mathcal{B}(T)^2 + 20 k \mathcal{B}(T) \int_0^n   \Gamma_1(v) g_n(v, s) dv.
\end{align}
Next, the term $S_2^n(s)$ can be estimated as
\begin{align}\label{Qn}
S_2^n(s)=-(1-\tau) \int_0^n \int_{n-v}^n & \{\Gamma_1 (v)+\Gamma_1 (v')\}  \mathds{1}_{(1/n, n)}(v)  \mathds{1}_{(1/n, n)}(v') E(v, v') \nonumber\\
 &~~~~~~  \times  \varphi(v, v') g_n(v, s)g_n(v', s) dv' dv \le  0
\end{align}
due to the non-negativity of $g_n$ and $\varphi$.\\

To proceed further in estimating the term $S_3^n(s)$, the monotonicity of ${\Gamma'_1}$ together with the relations
\eqref{MCP} and \eqref{convexp5} are used which give
\begin{align*}
S_3^n(s) \le & \int_0^n \int_0^{n}  \bigg[\int_0^{v'+v''} \frac{\Gamma_1(v)}{v} v P(v|v';v'')dv - \Gamma_1(v') - \Gamma_1(v'') \bigg] \nonumber\\
&~~~~~~~~~~~~~~~\times \varphi(v', v'') g_n(v', s)g_n(v'', s) dv'' dv' \nonumber\\
\le & \int_0^n \int_0^{n}  \bigg[\frac{\Gamma_1(v'+v'')}{(v'+v'')} \int_0^{v'+v''}  v P(v|v';v'')dv - \Gamma_1(v') - \Gamma_1(v'') \bigg] \nonumber\\
&~~~~~~~~~~~~~~~~~\times  \varphi(v', v'') g_n(v', s)g_n(v'', s) dv'' dv'\nonumber\\
= & \int_0^n \int_0^{n}  \{ \Gamma_1(v'+v'') -\Gamma_1(v') - \Gamma_1(v'') \}  \varphi(v', v'') g_n(v', s)g_n(v'', s) dv'' dv'.
 \end{align*}
Further, by using $(A_1)$, the above can be simplified as
\begin{align*}
S_3^n(s)
\le & 2k \int_0^1 \int_0^{1} \frac{(1+v'+v'')}{(v'+v'')^{\alpha}} \times  \frac{v' \Gamma_1(v'')+ v'' \Gamma_1(v')}{(v'+v'')} g_n(v', s)g_n(v'', s) dv'' dv' \nonumber\\
& + 4k \int_1^n \int_0^{1}  \frac{(1+v'+v'')}{(v'+v'')^{\alpha}} \times  \frac{v' \Gamma_1(v'')+ v'' \Gamma_1(v')}{(v'+v'')}  g_n(v', s)g_n(v'', s) dv'' dv'\nonumber\\
& + 2k \int_1^n \int_1^{n}   \frac{(1+v'+v'')}{(v'+v'')^{\alpha}} \times  \frac{v' \Gamma_1(v'')+ v'' \Gamma_1(v')}{(v'+v'')} g_n(v', s)g_n(v'', s) dv'' dv' \nonumber\\
\le & 12 k \int_0^1 \int_0^{1} v'^{-2\alpha}   \frac{v' \Gamma_1(v'') }{(v'+v'')} g_n(v', s)g_n(v'', s) dv'' dv'  \nonumber\\
& + 12 k \int_1^n \int_0^{1}\frac{v'}{(v'+v'')^{\alpha}} \times  \frac{v' \Gamma_1(v'')+ v''\Gamma_1(v')}{(v'+v'')}g_n(v', s)g_n(v'', s) dv'' dv' \nonumber\\
& + 4k \int_1^n \int_1^{n}  \frac{v' \Gamma_1(v'')+ v'' \Gamma_1(v')}{(v'+v'')^{\alpha}} g_n(v', s)g_n(v'', s) dv'' dv'.
\end{align*}
Hence, by applying Lemma \ref{B(T)}, we get
\begin{align}\label{Rn}
S_3^n(s) \le & 24 k \Gamma_1(1)\mathcal{B}(T)^2 + 20 k \mathcal{B}(T) \int_0^n   \Gamma_1(v) g_n(v, s) dv.
\end{align}
Finally, $S_4^n(s)$ can be estimated similar to $S_2^n(s)$ as
\begin{align}\label{Sn}
S_4^n(s)\le  &-2(1-\tau )\int_0^n \int_{n-v'}^n  \Gamma_1(v')     \mathds{1}_{(1/n, n)}(v)  \mathds{1}_{(1/n, n)}(v') E_1(v, v')\nonumber\\
  &~~~~~~~~\times \varphi(v', v'') g_n(v', s)g_n(v'', s) dv'' dv' \le 0.
\end{align}
Inserting (\ref{Pnf}), (\ref{Qn}), (\ref{Rn}) and (\ref{Sn}) into (\ref{large3}) gives us
\begin{align*}
\int_0^n \Gamma_1(v)g_n(v, t)dv &+ 2k(1-\tau) \int_0^t \int_0^n \int_{n-v}^{n} \Gamma_1(v)  \mathds{1}_{(1/n, n)}(v)  \mathds{1}_{(1/n, n)}(v')\nonumber\\
  &~~~~~~~~~~~~~~~~~~~\times  \varphi(v, v') g_n(v, s) g_n(v', s)dv' dv ds \nonumber\\
  \le & \Upsilon_1 + 48 k \Gamma_1(1)\mathcal{B}(T)^2 + 40 k \mathcal{B}(T) \int_0^n   \Gamma_1(v) g_n(v, s)dv.
\end{align*}
Then by Gronwall's inequality, we get
\begin{align*}
\int_0^n \Gamma_1(v)g_n(v, t)dv &+ (1-\tau) \int_0^t \int_0^n\int_{n-v}^{n} \Gamma_1(v)  \mathds{1}_{(1/n, n)}(v)  \mathds{1}_{(1/n, n)}(v')\nonumber\\
  &~~~~~~~~~~~~~~~~~~~\times  \varphi(v, v') g_n(v, s) g_n(v', s)dv' dv ds
 \le  \mathcal{G}(T),
\end{align*}
where $\mathcal{G}(T)=  ( \Upsilon_1 + 48 k \Gamma_1(1)\mathcal{B}(T)^2T ) e^{40 k \mathcal{B}(T) T}$, and it completes the proof of Lemma \ref{MassConCoagMulti}.
\end{proof}

To proceed further in achieving our main goal, a refined version of de la Vall\`{e}e Poussin theorem \cite{Laurencot:2015} is used to show the equi-integrability condition for the family of solutions $\{ g_n\}_{n > 1}$ in the next subsection.

\subsection{Equi-integrability}
\begin{lem}\label{last}
Assume that the kinetic coefficient $\varphi$, the coalescence efficiency $E$, the probability distribution function $P$ and the initial data satisfy $(A_1)$--$(A_4)$, respectively. Let $ \lambda \in (1, n)$ and $T>0$, then
\begin{align*}
 \sup_{t\in [0,T]}\int_0^{\lambda} \Gamma_2( v^{-\alpha}  g_n(v, t))dv \le \Xi(\lambda, T),
\end{align*}
where $\Gamma_2 \in \mathcal{C}_{VP, \infty}$ satisfies \eqref{convexp1} and \eqref{convexp2} and $\Xi(\lambda, T)$ is a positive constant depending on $\lambda$ and $T$.\\
\end{lem}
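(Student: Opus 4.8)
The plan is to derive a Gronwall-type differential inequality for $Y_n(t):=\int_0^\lambda \Gamma_2(v^{-\alpha}g_n(v,t))\,dv$ and then integrate it. Writing $w_n(v,t):=v^{-\alpha}g_n(v,t)$ and differentiating under the integral sign (legitimate since $g_n\in\mathcal{C}^1([0,\infty);L^1(0,n))$ by Proposition \ref{propBarik}), the chain rule and \eqref{CCBETRUN} give $Y_n'(t)=\int_0^\lambda v^{-\alpha}\Gamma_2'(w_n)[\mathcal{C}_B^n(g_n)-\mathcal{CB}_D^n(g_n)+\mathcal{B}_B^n(g_n)]\,dv$. Since $\Gamma_2'\ge 0$, $v^{-\alpha}\ge 0$ and $\mathcal{CB}_D^n(g_n)\ge 0$, the death term is discarded, and it remains to bound the two birth contributions by $C(\lambda,T)\,[1+Y_n(t)]$. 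The initial value is controlled by \eqref{Initialtrunc} and \eqref{convexp2}: on $(0,\lambda)$ one has $g_n^{in}=g^{in}$, hence $Y_n(0)\le\int_0^\infty\Gamma_2(v^{-\alpha}g^{in})\,dv=\Upsilon_2<\infty$.

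For the coagulation birth term I would first use $(A_1)$ in the form $\varphi_n^\tau(v-v',v')\le\varphi(v-v',v')\le k(1+\lambda)v^{-\alpha}$ for $v\in(0,\lambda)$, together with $E\le 1$. The decisive observation is that, because $v\ge v-v'$ and $v\ge v'$, the singular weight is absorbed by the solution itself: $v^{-2\alpha}g_n(v-v')g_n(v')\le w_n(v-v')\,w_n(v')$. Applying \eqref{convexp4} with $q_2=w_n(v)$ and $q_1=w_n(v-v')$ then yields $w_n(v-v')\Gamma_2'(w_n(v))\le \Gamma_2(w_n(v-v'))+\Gamma_2(w_n(v))$, so that after a Fubini rearrangement ($u=v-v'$) the coagulation contribution is bounded by $C(\lambda)\big(\int_0^\lambda w_n\,dv'\big)\int_0^\lambda \Gamma_2(w_n)\,dv$. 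Lemma \ref{B(T)} (splitting $(0,\lambda)$ at $1$ and comparing $v^{-\alpha}$ with $v^{-2\alpha}$ near the origin and with $v$ away from it) gives $\int_0^\lambda w_n\,dv'\le 2\mathcal{B}(T)$, leaving a bound of the form $C(\lambda,T)\,Y_n(t)$.

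The breakage birth term is the main obstacle. After reorganising the triple integral by Fubini and the substitution used in \eqref{sEquibound4} (writing the parents as $p,q$ and the daughter as $v$), this contribution reads $\tfrac12\int\!\!\int E_1(p,q)\varphi_n^\tau(p,q)g_n(p)g_n(q)\big[\int_0^{\min(\lambda,p+q)} v^{-\alpha}\Gamma_2'(w_n(v))P(v|p;q)\,dv\big]\,dp\,dq$. Here the argument of $\Gamma_2'$ is $w_n(v)$ while the source involves the parents, so \eqref{convexp4} is applied with $q_1=v^{-\alpha}P(v|p;q)$ to split the inner integrand as $v^{-\alpha}P\,\Gamma_2'(w_n(v))\le\Gamma_2(v^{-\alpha}P)+\Gamma_2(w_n(v))$. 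The $\Gamma_2(w_n(v))$ part is harmless: integrating it over $v\in(0,\lambda)$ and using $\tfrac12\int\!\!\int E_1\varphi_n^\tau g_n(p)g_n(q)\,dp\,dq\le C(T)$ (again by $(A_1)$ and Lemma \ref{B(T)}) produces another $C(T)\,Y_n(t)$. The genuine difficulty is the uniform (in $n$ and $s\in[0,T]$) bound for $J_n:=\int\!\!\int E_1\varphi_n^\tau g_n(p)g_n(q)\big[\int_0^{\min(\lambda,p+q)}\Gamma_2(v^{-\alpha}P(v|p;q))\,dv\big]\,dp\,dq$. Inserting the explicit $P(v|p;q)=(\theta+2)v^\theta(p+q)^{-1-\theta}$ and the growth estimate $\Gamma_2(z)\le S_\gamma(\Gamma_2)z^\gamma$ from property (c) of $\mathcal{C}_{VP,\infty}$, the inner $v$-integral evaluates to a constant times $(p+q)^{1-\gamma(1+\alpha)}$, provided $\gamma(\alpha-\theta)<1$ so that $\int_0^{\cdot} v^{\gamma(\theta-\alpha)}\,dv$ converges. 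The crux is then that $J_n\le C\int\!\!\int(1+p+q)(p+q)^{\,1-\alpha-\gamma(1+\alpha)}g_n(p)g_n(q)\,dp\,dq$ must be controlled by the moments of Lemma \ref{B(T)}: the exponent $e:=1-\alpha-\gamma(1+\alpha)$ tends to $-2\alpha$ as $\gamma\downarrow 1$, so the bound $(p+q)^{e}\le 2^{e}(pq)^{e/2}$ reduces the singular part to $\big(\int p^{e/2}g_n\big)^2$ with $e/2\ge-2\alpha$, while at infinity the relevant exponent stays below $1$. All of this forces a careful choice of $\gamma\in(1,2)$ close to $1$, which is available precisely because $\alpha<1/2$ and $2\alpha-\theta<1$ (equivalently $\alpha-\theta<1-\alpha$).

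Once both birth terms are estimated, one obtains $Y_n'(t)\le C_1(\lambda,T)+C_2(\lambda,T)\,Y_n(t)$, and Gronwall's inequality together with $Y_n(0)\le\Upsilon_2$ yields the uniform bound $Y_n(t)\le(\Upsilon_2+C_1 T)e^{C_2 T}=:\Xi(\lambda,T)$ for all $t\in[0,T]$, which is the claim. I expect the only serious work to lie in the breakage source term $J_n$ and in fixing an admissible exponent $\gamma$; the coagulation term and the Gronwall closure are routine given Lemma \ref{B(T)} and the convexity inequalities \eqref{convexp3}--\eqref{convexp4}.
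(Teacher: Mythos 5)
Your proposal is correct and, at the skeleton level, matches the paper's proof: differentiate $Y_n(t)=\int_0^\lambda\Gamma_2(v^{-\alpha}g_n(v,t))\,dv$, discard the nonnegative death term, treat the coagulation gain exactly as the paper does (the bound $\varphi\le k(1+\lambda)v^{-\alpha}$ on $(0,\lambda)$, absorption of the singular weight by the smaller arguments, inequality \eqref{convexp4}, and $\int_0^\lambda v^{-\alpha}g_n\,dv\le 2\mathcal{B}(T)$ from Lemma \ref{B(T)}), and close with Gronwall together with $Y_n(0)\le\Upsilon_2$. Where you genuinely diverge is the breakage gain, and your variant is in fact the more robust one. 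The paper applies \eqref{convexp4} with $q_1=v^{\theta-\alpha}$, leaving the parent factor $(\theta+2)(v'+v'')^{-(1+\theta)}$ (times $\varphi$) outside \emph{linearly} and capping the daughter integral at $\lambda$; in \eqref{est1} this produces the parent integral $\int_0^1(1+v')\,v'^{-(1+\theta+\alpha)}g_n\,dv'$, which is then bounded by $2\int_0^1 v'^{-2\alpha}g_n\,dv'$ --- a step that requires $1+\theta+\alpha\le 2\alpha$, i.e.\ $\theta\le\alpha-1$, which is incompatible with the standing relation $2\alpha-\theta<1$ in $(A_3)$ (that relation forces $1+\theta>2\alpha$). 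You instead place the entire weight inside the convex function, taking $q_1=v^{-\alpha}P(v|p;q)$: the parent singularity then enters at power $\gamma$, as $(p+q)^{-\gamma(1+\theta)}$, and is compensated by the positive power $(p+q)^{1+\gamma(\theta-\alpha)}$ coming from the daughter integral capped at $\min(\lambda,p+q)$ (convergent precisely when $\gamma(\alpha-\theta)<1$, the same constraint the paper needs for $A^{\dag}$), leaving the net exponent $e=1-\alpha-\gamma(1+\alpha)$, which for $\gamma$ close to $1$ lies in $[-4\alpha,-2\alpha)$ and is genuinely controlled by Lemma \ref{B(T)} after $(p+q)^e\le 2^e(pq)^{e/2}$; moreover your $\Gamma_2(w_n)$-part carries the clean coefficient $\tfrac12\int\!\!\int\varphi\,g_n g_n\le C(T)$ rather than the uncontrolled weight $(v'+v'')^{-(1+\theta+\alpha)}$. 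The only price is the explicit choice $\gamma\in\bigl(1,\min\bigl\{2,\tfrac{1+3\alpha}{1+\alpha},\tfrac{1}{\alpha-\theta}\bigr\}\bigr)$ with $S_\gamma(\Gamma_2)<\infty$, available from property (c) and from $\alpha<\tfrac12$, $2\alpha-\theta<1$ --- nothing beyond what the paper already consumes. In short: same Gronwall--convexity strategy, but your nonlinear splitting of $v^{-\alpha}P$ closes the breakage estimate at exactly the point where the paper's linear splitting invokes an inequality that its own hypotheses exclude.
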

\begin{proof} We first consider $h_n(v, t):= v^{-\alpha}  g_n(v, t)$ and $n > \lambda$. Then by using \eqref{CCBETRUN} and non-negativity of $g_n$, we obtain
\begin{align}\label{Equint1}
\frac{d}{dt}\int_0^{\lambda} \Gamma_2(h_n(v, t))dv \le &\frac{1}{2} \int_0^{\lambda} \int_0^v \Gamma_2'(h_n(v, t))  v^{-\alpha} \varphi_n^{\tau}(v-v', v') g_n(v-v', t)g_n(v', t)dv' dv \nonumber\\
&+\frac{1}{2} \int_0^{\lambda} \int_v^n \int_0^{v'}  \Gamma_2'(h_n(v, t))  v^{-\alpha}  P(v|v'-v''; v'') \nonumber\\
& ~~~~~~~~~~~~\times  \varphi_n^{\tau}(v'-v'', v'') g_n(v'-v'', t) g_n(v'', t)dv'' dv' dv.
\end{align}
Next, the first term on the right-hand side to \eqref{Equint1} can be estimated, by using Fubini's theorem, applying the transformation $v-v'=v_1$ and $v'=v_2$, $(A_1)$, \eqref{convexp4} and Lemma \ref{B(T)}, as
\begin{align}\label{Equintp3}
 \frac{1}{2} \int_0^{\lambda} \int_0^{\lambda-v_2} & \Gamma_2'(h_n(v_1+v_2, t)) (v_1+v_2)^{-\alpha} \varphi_n^{\tau}(v_1, v_2) g_n(v_1, t)g_n(v_2, t) dv_1 dv_2 \nonumber\\
 \le & \frac{1}{2}k (1+\lambda) \int_0^\lambda \int_0^{\lambda-v'}  v^{-\alpha} v'^{-\alpha}  \Gamma_2'(h_n(v+v', t))g_n(v, t) g_n(v', t) dv dv' \nonumber\\
\le & \frac{1}{2}k (1+\lambda) \int_0^\lambda \int_0^{\lambda-v'}  v'^{-\alpha} \{ \Gamma_2(h_n(v+v', t))+\Gamma_2 (h_n(v, t)) \} g_n(v', t)dv dv' \nonumber\\
\le &  A(\lambda, T) \int_0^\lambda \Gamma_2 (h_n(v, t))dv,
\end{align}
where $A(\lambda, T):= k(1+\lambda) \mathcal{B}(T)$. Now, we evaluate the second term, by using repeated application of Fubini's theorem, substituting $v'-v''=v_2$ and $v''=v_3$, as
\begin{align}\label{Equintp3}
& \frac{1}{2} \int_0^\lambda \int_0^{v_2} \int_0^{v_2}   \Gamma_2'(h_n(v, t))  v^{-\alpha}  P(v|v_2-v_3;v_3) \varphi_n^{\tau}(v_2-v_3, v_3) g_n(v_2-v_3, t) g_n(v_3, t)dv dv_3   dv_2 \nonumber\\
&+\frac{1}{2} \int_\lambda^n  \int_0^{v_2} \int_0^\lambda \Gamma_2'(h_n(v, t))  v^{-\alpha}  P(v|v_2-v_3;v_3) \varphi_n^{\tau}(v_2-v_3, v_3) g_n(v_2-v_3, t) g_n(v_3, t)dv dv_3   dv_2 \nonumber\\
=&\frac{1}{2} \int_0^\lambda \int_0^{\lambda-v''} \int_0^{v'+v''}  \Gamma_2'(h_n(v, t))  v^{-\alpha}  P(v|v';v'') \varphi_n^{\tau}(v', v'') g_n(v', t) g_n(v'', t)dv dv' dv''  \nonumber\\
&+\frac{1}{2} \int_0^\lambda  \int_{\lambda-v''}^{n-v''} \int_0^\lambda \Gamma_2'(h_n(v, t))  v^{-\alpha}  P(v|v';v'') \varphi_n^{\tau}(v', v'') g_n(v', t) g_n(v'', t)dv dv' dv'' \nonumber\\
&+\frac{1}{2} \int_\lambda^n  \int_0^{n-v''} \int_0^\lambda \Gamma_2'(h_n(v, t))  v^{-\alpha}  P(v|v';v'') \varphi_n^{\tau}(v', v'') g_n(v', t) g_n(v'', t)dv dv' dv'' \nonumber\\
\le &\frac{1}{2} \int_0^n  \int_{0}^{n} \int_0^\lambda \Gamma_2'(h_n(v, t))  v^{-\alpha}  P(v|v';v'') \varphi_n^{\tau}(v', v'') g_n(v', t) g_n(v'', t)dv dv' dv''.
\end{align}
By using $(A_3)$, \eqref{convexp4}, the definition of $\mathcal{C}_{VP, \infty}$ and Lemma \ref{B(T)}, we estimate \eqref{Equintp3} as
\begin{align}\label{est1}
&\frac{1}{2}k (\theta+2) \int_0^n  \int_{0}^{n} \int_0^\lambda  \{ \Gamma_2(h_n(v, t))+ \Gamma_2( v^{\theta-\alpha} ) \} (1+v'+v'') \frac{1}{(v'+v'')^{\theta+1}} (v'+v'')^{-\alpha} \nonumber\\
& ~~~~~~~~~~~~~~ \times  g_n(v', t) g_n(v'', t)dv dv' dv''\nonumber\\
\le & \frac{1}{2}k (\theta+2) \int_0^n  \int_{0}^{n} \int_0^\lambda  \Gamma_2(h_n(v, t))  \frac{(1+v')+(1+v'')}{v'^{\theta+1}} v'^{-\alpha} g_n(v', t) g_n(v'', t)dv dv' dv''\nonumber\\
 & + \frac{1}{2}k (\theta+2) \int_0^n  \int_{0}^{n} \int_0^\lambda   \Gamma_2( v^{\theta-\alpha} ) \frac{(1+v')(1+v'')}{v'^{\theta+1}} v'^{-\alpha}  g_n(v', t) g_n(v'', t)dv dv' dv''\nonumber\\
 \le & k (\theta+2) \mathcal{B}(T) \bigg[ \int_0^{\lambda}  \Gamma_2(h_n(v, t)) dv  +  \int_0^{\lambda}  v^{\gamma(\theta-\alpha)} \frac{\Gamma_2( v^{\theta-\alpha} ) }{ v^{\gamma(\theta-\alpha)} }  dv  \bigg] \nonumber\\
  &~~~~~~~~~~~~~~~~~\times\bigg(\int_0^1  \frac{(1+v')}{v'^{\theta+1}} v'^{-\alpha} g_n(v', t) dv'+\int_1^n  \frac{(1+v')}{v'^{\theta+1}} v'^{-\alpha} g_n(v', t) dv' \bigg)  \nonumber\\
   \le & 2 k (\theta+2) \mathcal{B}(T)  \bigg[ \int_0^{\lambda}  \Gamma_2(h_n(v, t)) dv  + S_{\gamma}(\Gamma_2)   \frac{\lambda^{\gamma(\theta-\alpha)+1}}{ \gamma(\theta-\alpha)+1}  \bigg] \nonumber\\
  &~~~~~~~~~~~~~~~~~\times\bigg(\int_0^1  v'^{-2\alpha} g_n(v', t) dv'+\int_1^n  v g_n(v', t) dv' \bigg)  \nonumber\\
    \le& 2k (\theta+2) \mathcal{B}^2(T) \bigg[ \int_0^{\lambda}  \Gamma_2(h_n(v, t)) dv  + S_{\gamma}(\Gamma_2)   \frac{\lambda^{\gamma(\theta-\alpha)+1}}{ \gamma(\theta-\alpha)+1}  \bigg].
\end{align}
Collecting estimates in \eqref{Equintp3} and \eqref{est1}, and inserting them into \eqref{Equint1}, it gives
\begin{align}\label{est2}
\frac{d}{dt}\int_0^\lambda \Gamma_2(h_n(v, t))dv \le  A^{\ast}(\lambda, T) \int_0^\lambda \Gamma_2(h_n(v, t)) dv +A^{\dag}(\lambda, T),
\end{align}
where $ A^{\ast}(\lambda, T):= A(\lambda, T)+2k(\theta +2) \mathcal{B}(T)$ and $A^{\dag}(\lambda, T):= 2k \frac{(\theta +2)}{(\gamma \theta-\gamma \alpha+1)} S_\gamma(\Gamma_2)  \mathcal{B}^2(T)\lambda^{\gamma \theta -\gamma \alpha+1} $.
Finally, using \eqref{convexp2} and the Gronwall's inequality into \eqref{est2}, we obtain
\begin{align*}
\int_0^\lambda \Gamma_2(v^{-\alpha} g_n(v, t))dv \le \Xi(\lambda, T).
\end{align*}
 This proves Lemma \ref{last}.
\end{proof}

\subsection{Equi-continuity w.r.t. time in weak sense}
\begin{lem}\label{Equicontinuityweaksense}
Let $T>0$ and $ \lambda \in (1, n)$. Assume $(A_1)$--$(A_4)$ hold. Then for $0 \le s \le t \le T$ and $\Delta \in L^{\infty}(\mathds{R}_{+})$, we have
\begin{align*}
\bigg|\int_0^{\lambda}  v^{-\alpha} \Delta(v) [g_n(v, t)-g_n(v, s)]dv \bigg|\le \Theta(\lambda, T)(t-s),
\end{align*}
where $\Theta(\lambda, T)$  is a positive constant depending on $\lambda$ and $T$.
\end{lem}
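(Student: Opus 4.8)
The plan is to reduce the claimed time-modulus estimate to a uniform (in time) bound on the time derivative of $\sigma \mapsto \int_0^\lambda v^{-\alpha}\Delta(v) g_n(v,\sigma)\,dv$. Since Proposition \ref{propBarik} guarantees $g_n \in \mathcal{C}^1([0,\infty); L^1(0,n))$, which together with the domination supplied by the estimates below justifies differentiating under the integral sign, I may write
\[
\int_0^\lambda v^{-\alpha}\Delta(v)[g_n(v,t)-g_n(v,s)]\,dv = \int_s^t \int_0^\lambda v^{-\alpha}\Delta(v)\,\partial_\sigma g_n(v,\sigma)\,dv\,d\sigma,
\]
so it suffices to bound the inner integral in absolute value by a constant $\Theta(\lambda,T)$ uniformly in $\sigma \in [0,T]$; integrating then produces the factor $(t-s)$. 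The weight $v^{-\alpha}$ is unbounded near the origin, so the $L^\infty$ weak formulation \eqref{nctp3} cannot be applied directly with $h(v)=v^{-\alpha}\Delta(v)$; instead I substitute the strong form \eqref{CCBETRUN}, obtaining three contributions coming from $\mathcal{C}_B^n$, $\mathcal{CB}_D^n$ and $\mathcal{B}_B^n$.

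Each of the three integrals is estimated by $\|\Delta\|_{L^\infty}$ times a moment expression controlled by Lemma \ref{B(T)}. For the coagulation birth term I apply Fubini's theorem and the substitution $v-v'=v_1$, $v'=v_2$ exactly as in \eqref{sEquibound3}, bound $E\le 1$ and $\mathds{1}_{(0,\lambda)}(v_1+v_2)(v_1+v_2)^{-\alpha}\le (v_1+v_2)^{-\alpha}$, and invoke $(A_1)$ together with $(v_1+v_2)^{-\alpha}\le v_1^{-\alpha}$. For the breakage birth term I use the repeated Fubini argument and change of variables of \eqref{sEquibound4}; the key computation is that for $P(v|v';v'')=(\theta+2)v^{\theta}(v'+v'')^{-1-\theta}$ one has $\int_0^{v'+v''} v^{-\alpha}P(v|v';v'')\,dv = \frac{\theta+2}{\theta-\alpha+1}(v'+v'')^{-\alpha}$, which is finite because $2\alpha-\theta<1$ forces $\theta-\alpha+1>0$; this is the single-weight analogue of $(A_3)$. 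Both birth terms then reduce, via $(A_1)$ and $(v+v')^{-\alpha}\le v^{-\alpha}$, to integrals of the form $\int\int (1+v+v')\,v^{-2\alpha}g_n(v)g_n(v')\,dv\,dv'$.

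To close these estimates I split the $v'$-integration into $(0,1)$ and $(1,n)$, dominating $g_n$ by $v'^{-2\alpha}g_n$ on $(0,1)$ and by $v'g_n$ on $(1,n)$, so that the zeroth moment $\int_0^n g_n(v',\sigma)\,dv'$ and the linear factor $\int_0^n(1+v')g_n(v',\sigma)\,dv'$ are both bounded by $\mathcal{B}(T)$ from Lemma \ref{B(T)}; combined with $\int_0^\lambda v^{-2\alpha}g_n(v,\sigma)\,dv \le \mathcal{B}(T)$ this yields a bound proportional to $(1+\lambda)\mathcal{B}(T)^2$. The death term $\int_0^\lambda v^{-\alpha}\Delta(v)\,\mathcal{CB}_D^n(g_n)\,dv$ is handled the same way, the only point of care being that here $v'$ ranges up to $n-\tau v$, so the factor $(1+v+v')$ grows linearly in $v'$ and must be absorbed into the first moment rather than the zeroth moment. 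Assembling the three bounds gives a constant $\Theta(\lambda,T)$ independent of $\sigma$ and of $n$, depending only on $k$, $\|\Delta\|_{L^\infty}$, $\lambda$, $T$ and $\mathcal{B}(T)$.

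The main obstacle is precisely the interplay between the singular weight $v^{-\alpha}$ and the unbounded domain of the death integral: one must convert every occurrence of $v^{-\alpha}\varphi_n^\tau$ into the $v^{-2\alpha}$-weighted and linearly-weighted moments that Lemma \ref{B(T)} controls, while checking that the resulting constant does not blow up as $n\to\infty$. Once the uniform-in-$\sigma$ derivative bound is established, the conclusion $\left|\int_0^\lambda v^{-\alpha}\Delta(v)[g_n(v,t)-g_n(v,s)]\,dv\right|\le \Theta(\lambda,T)(t-s)$ follows immediately.
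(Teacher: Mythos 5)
Your proposal is correct and takes essentially the same route as the paper's proof: reduce the claim to a uniform-in-time bound on $\int_0^\lambda v^{-\alpha}|\partial_t g_n|\,dv$ via the strong form \eqref{CCBETRUN}, then estimate the coagulation-birth, death and breakage-birth contributions using Fubini's theorem with the same changes of variables, $(A_1)$, the explicit single-weight moment $\int_0^{v'+v''} v^{-\alpha}P(v|v';v'')\,dv=\frac{\theta+2}{\theta+1-\alpha}(v'+v'')^{-\alpha}$ (finite since $2\alpha-\theta<1$), and a near-origin/far-from-origin splitting closed by Lemma \ref{B(T)}. This matches the paper's argument, including the structure of its constant $\Theta(\lambda,T)\sim k\,\|\Delta\|_{L^\infty}\bigl((1+\lambda)+\tfrac{\theta+2}{\theta+1-\alpha}\bigr)\mathcal{B}^2(T)$.
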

\begin{proof}
Let $0 \le s \le t \le T$ such that $|t-s|<\delta$ and $\delta = \frac{\epsilon}{\Theta(\lambda, T)}$, and $\Delta \in L^{\infty}(\mathds{R}_{+})$. Next, we simplify the following integral as
\begin{align}\label{Equicontinuity1}
\int_0^\lambda v^{-\alpha} & \Delta(v)  |g_n(v, t)-g_n(v, s)|dv \nonumber\\
 \le & \| \Delta \|_{L^{\infty}(\mathds{R}_{+})} \int_s^t \int_0^\lambda v^{-\alpha} \bigg|  \frac{\partial g_n}{\partial t}(v, \zeta) \bigg| dv d\zeta \nonumber\\
 \le  & \| \Delta \|_{L^{\infty}(\mathds{R}_{+})} \int_s^t  \bigg[   \frac{1}{2} \int_0^\lambda  \int_{0}^{v}  v^{-\alpha} \varphi_n^{\tau}(v-v', v') g_n(v-v', \zeta) g_n(v', \zeta)dv' dv \nonumber\\
  &+\frac{1}{2} \int_0^\lambda \int_{v}^{n} \int_{0}^{v'} v^{-\alpha} P(v|v'-v''; v'') \varphi_n^{\tau}(v'-v'', v'') g_n(v'-v'', \zeta) g_n(v'', \zeta)dv'' dv'\nonumber\\ & + \int_0^\lambda  \int_{0}^{n-v\tau} v^{-\alpha} \varphi_n^{\tau}(v, v') g_n (v, \zeta) g_n(v', \zeta)dv' dv \bigg] d\zeta.
\end{align}
Now, we estimate each integral on the right-hand side to \eqref{Equicontinuity1} separately. First, we evaluate the first integral, by using Fubini's theorem, ($A_1$) and Lemma \ref{B(T)}, as
\begin{align}\label{Equicontinuity2}
 \frac{1}{2} \int_s^t \int_0^\lambda  \int_{0}^{v} & v^{-\alpha} \varphi_n^{\tau}(v-v', v') g_n(v-v', \zeta) g_n(v', \zeta)dv' dv d\zeta \nonumber\\
\le  &  \frac{1}{2}k \int_s^t \int_0^\lambda  \int_{0}^{\lambda-v'} (v+v')^{-\alpha} \frac{(1+v+v')}{(v+v')^{\alpha}} g_n(v, \zeta) g_n(v', \zeta) dv dv' d\zeta \nonumber\\
\le  &  \frac{1}{2}k (1+\lambda)\int_s^t \int_0^\lambda  \int_{0}^\lambda v^{-\alpha} v'^{-\alpha} g_n(v, \zeta) g_n(v', \zeta) dv dv' d\zeta \nonumber\\
\le &   \frac{1}{2}k (1+\lambda) \mathcal{B}^2(T) (t-s).
\end{align}
Similar to \eqref{Equicontinuity2}, we estimate the second integral, by applying ($A_1$) and Lemma \ref{B(T)} as
\begin{align}\label{Equicontinuity3}
 \int_s^t \int_0^\lambda & \int_{0}^{n-v\tau}  v^{-\alpha} \varphi^{\tau}_n(v, v') g_n(v, \zeta) g_n(v', \zeta) dv' dv d\zeta \nonumber\\
\le & k  \int_s^t \int_0^\lambda  \int_{0}^{n}  v^{-\alpha}  \frac{(1+\lambda+v')}{(v+v')^{\alpha}}  g_n(v, \zeta) g_n(v', \zeta) dv' dv d\zeta \nonumber\\
\le & k \mathcal{B}(T)  \int_s^t  \int_{0}^{n}  (1+\lambda+v') v'^{-\alpha}  g_n(v', \zeta)dv' d\zeta
\le  2 k (1+\lambda) \mathcal{B}^2(T)(t-s).
\end{align}
The last integral can be estimated, by using the repeated application of Fubini's theorem, the transformation $v'-v''=v_1$ \& $v''=v_2$, ($A_1$), ($A_3$) and Lemma \ref{B(T)}, as
\begin{align}\label{Equicontinuity4}
\frac{1}{2} &  \int_s^t \int_0^\lambda \int_{v}^{n} \int_{0}^{v'} v^{-\alpha} P(v|v'-v''; v'') \varphi_n^{\tau}(v'-v'', v'') g_n(v'-v'', \zeta) g_n(v'', \zeta) dv'' dv' dv d\zeta \nonumber\\
\le & \frac{1}{2}  \int_s^t \int_0^n \int_{0}^{n} \int_{0}^{v'+v''} v^{-\alpha} P(v|v'; v'') \varphi_n^{\tau}(v', v'') g_n(v', \zeta) g_n(v'', \zeta) dv  dv'  dv'' d\zeta \nonumber\\
\le & \frac{1}{2} k \frac{(\theta+2)} {(\theta+1-\alpha)} \int_s^t \int_0^n \int_{0}^{n} (v'+v'')^{-\alpha}  \frac{(1+v'+ v'')}{(v'+v'')^{\alpha}}  g_n(v', \zeta) g_n(v'', \zeta)  dv'  dv'' d\zeta \nonumber\\
= & \frac{1}{2} k \frac{(\theta+2)} {(\theta+1-\alpha)} \int_s^t \bigg\{ \int_0^1 \int_{0}^{1} +\int_0^1 \int_{1}^{n} +\int_1^n \int_{0}^{1}+\int_1^n \int_{1}^{n} \bigg\} \frac{(1+v'+ v'')}{(v'+v'')^{2\alpha}} \nonumber\\
&~~~~~~~~~~~~~~~~~~\times   g_n(v', \zeta) g_n(v'', \zeta)  dv'  dv'' d\zeta \nonumber\\
\le  & \frac{1}{2} k \frac{(\theta+2)} {(\theta+1-\alpha)} \int_s^t \bigg\{ 3\mathcal{B}^2(T) + 3\mathcal{B}^2(T) +3\mathcal{B}^2(T) +4\mathcal{B}^2(T) \bigg\} d\zeta \nonumber\\
\le  & \frac{13}{2} k \frac{(\theta+2)} {(\theta+1-\alpha)} \mathcal{B}^2(T)(t-s).
\end{align}
Inserting \eqref{Equicontinuity2}, \eqref{Equicontinuity3}, and \eqref{Equicontinuity4} into \eqref{Equicontinuity1}, we obtain
\begin{align}\label{Equicontinuity6}
\int_0^\lambda v^{-\alpha} & \Delta(v)  |g_n(v, t)-g_n(v, s)|dv \le  \Theta(\lambda, T) (t-s),
\end{align}
where
\begin{align*}
\Theta(\lambda, T)=\frac{1}{2}k  \| \Delta \|_{L^{\infty}(\mathds{R}_{+})} \bigg( 3 (1+\lambda)  + 13  \frac{(\theta+2)} {(\theta+1-\alpha)}  \bigg) \mathcal{B}^2(T).
\end{align*}
This completes the proof of Lemma \ref{Equicontinuityweaksense}.
\end{proof}

\subsection{Convergence of integrals}
In this section, we complete the proof of Theorem \ref{TheoremCCBE} by the using above subsections.
\emph{Proof of Theorem} \ref{TheoremCCBE}:\
From the refined version of de la Vall\`{e}e Poussin theorem, Lemma \ref{B(T)}--\ref{last}, and then using Dunford-Pettis theorem and a variant of the Arzel\`{a}-Ascoli theorem, see \cite{Vrabie:1995}, we conclude that $(g_n)$ is relatively compact in $\mathcal{C}([0, T]^w; L_{-\alpha}^1(0, \lambda))$ for each $T>0$. Then there exists a subsequence of $(g_n)$ (not relabeled) and a nonnegative function $g\in \mathcal{C}([0, T]^w; L_{-\alpha}^1(0, \lambda))$ such that
\begin{align}\label{weakconvergence1}
 g_n \to g \ \ \ \text{in} \ \ \mathcal{C}([0, T]^w: L^1(0, \lambda); v^{-\alpha} dv)
\end{align}
for each $T>0$. We can improve the convergence \eqref{weakconvergence1} to
\begin{align}\label{weakconvergence2}
 g_n \to g \ \ \ \text{in} \ \ \mathcal{C}([0, T]^w: L^1(\mathds{R}_{>0}); (v^{-\alpha} +v) dv)
\end{align}
by applying Lemma \ref{B(T)}, (\ref{C(T)}) and \eqref{convexp1}.\\
 Next, we need to show $g$ is actually a solution to \eqref{CCBE}--\eqref{Initialdata} in the sense of \eqref{definition}. For this, we have to claim all the truncated integrals in \eqref{CCBETRUN} converges weakly to the original integrals in \eqref{CCBE}, respectively. To prove this convergence of integrals, one can follow \cite{Barik:2018Existence}. Now, using the weak convergence \eqref{weakconvergence2} into \eqref{CCBETRUN}, we have
\begin{align*}
& \int_0^{\infty} h(v)  \{ g(v, t)-g^{in}(v) \} dv = \lim_{n \to \infty} \int_0^n h(v) \{g_n(v, t)-g_n^{in}(v) \} dv \nonumber\\
 = & \lim_{n\to \infty} \bigg\{
\frac{1}{2} \int_0^t\int_0^n \int_0^n \tilde{h}_{\tau}(v, v') \mathds{1}_{(1/n, n)}(v) \mathds{1}_{(1/n, n)}(v') E(v, v') \varphi(v, v') g_n(v, s)g_n(v', s)dv' dv ds \nonumber\\
& +\frac{1}{2} \int_0^t \int_0^n\int_0^{n} \Pi_{h, \tau}(v', v'') E_1(v', v'') \varphi(v', v'') \mathds{1}_{(1/n, n)}(v') \mathds{1}_{(1/n, n)}(v'')  g_n(v', s)g_n(v'', s) dv'' dv' ds \bigg\}\nonumber\\
=&  \frac{1}{2}\int_0^t \int_0^{\infty} \int_{0}^{\infty}\tilde{h}(v, v') E(v, v') \varphi(v, v') g(v, s) g(v', s)dv' dv ds \nonumber\\
&+\frac{1}{2} \int_0^t \int_0^{\infty} \int_0^{\infty} \Pi_{h}(v', v'') E_1(v', v'')  \varphi(v', v'')  g(v', s) g(v'', s) dv'' dv' ds,
\end{align*}
for every $h \in L^{\infty}(\mathds{R}_{+})$. This confirms that $g$ is a weak solution to \eqref{CCBE}--\eqref{Initialdata} in the sense of \eqref{definition}.\\
 Finally for the completeness to the proof of Theorem \ref{TheoremCCBE}, it remains to prove that $g$ is a mass-conserving solution to \eqref{CCBE}--\eqref{Initialdata}. In the one hand, for ($\tau=0$), it can be  easily shown similar to \cite{Filbet:2004Mass, Barik:2017Anote} and on the other hand, for conservative case ($\tau=1$), we infer from \eqref{weakconvergence2} and \eqref{Barikprop}, which completes the proof of Theorem \ref{CCBE}.

%

\bibliographystyle{plain}

\begin{thebibliography}{10}
\small{


  \bibitem{Barik:2019} P. K. Barik, Existence of mass-conserving weak solutions to the singular coagulation equation with multiple fragmentation, \textit{Evolution Equations and Control Theory}, doi: 10.3934/eect.2020012, 2019.

  \bibitem{Barik:2017Anote} P. K. Barik and A. K. Giri, A note on mass-conserving solutions to the coagulation and fragmentation equation by using
non-conservative approximation, \textit{Kinet. Relat. Models}, \textbf{11} 1125--1138, 2018.



\bibitem{Barik:2018Existence}  P. K. Barik and A. K. Giri, Existence and uniqueness of weak solutions to the singular kernels coagulation equation with collisional breakage. arXiv:1806.03911, 2018.



\bibitem{Barik:2018Weak}
P. K. Barik and A. K. Giri, Weak solutions to the continuous coagulation model with collisional breakage, arXiv:1805.10137, 2018.





\bibitem{Barik:2018Mass} P. K. Barik, A. K. Giri and P. Lauren\c{c}ot, Mass-conserving solutions to the Smoluchowski coagulation equation with singular kernel, \textit{Proc. Royal Soc. Edinburgh Sec. A: Math.}, 1--21, 2019.




\bibitem{Brown:1995}P. S. Brown,
Structural stability of the coalescence/breakage equations, \textit{J. Atmosph. Sci.}, \textbf{52} 3857--3865, 1995.


\bibitem{Camejo:2015} C. C. Camejo and G. Warnecke, The singular kernel coagulation equation with multifragmentation, \textit{Math. Methods Appl. Sci.}, \textbf{38} 2953--2973, 2015.



\bibitem{Cheng:1990}
Z. Cheng and S. Redner, Kinetics of fragmentation, \textit{J. Phys. A. Math. Gen.}, \textbf{23} 1233--1258, 1990.



\bibitem{Cheng:1988}
Z. Cheng and S. Redner, Scaling theory of fragmentation, \textit{Phys. Rev. Lett.}, \textbf{60} 2450--2453, 1988.





\bibitem{Ernst:2007}
M. H. Ernst and I. Pagonabarraga, The non-linear fragmentation equation, \textit{J. Phys. A. Math. Theor.}, \textbf{40} F331--F337, 2007.




\bibitem{Escobedo:2003} M. Escobedo, P. Lauren\c{c}ot, S. Mischler and B. Perthame, Gelation and mass conservation in coagulation-fragmentation models,
\textit{J. Diff. Eqs.}, \textbf{195} 143--174, 2003.




\bibitem{Filbet:2004Mass} F. Filbet and P. Lauren\c{c}ot, Mass-conserving solutions and non-conservative approximation to the Smoluchowski coagulation equation, \textit{ Archiv der Mathematik}, \textbf{83} 558--567, 2004.







\bibitem{Giri:2012} A. K. Giri, P. Lauren\c{c}ot and G. Warnecke, Weak solutions to the continuous coagulation with multiple fragmentation, \textit{Nonlinear Anal.}, \textbf{75} 2199--2208, 2012.








\bibitem{Kostoglou:2000} M. Kostoglou and A. J. Karabelas, A study of the nonlinear breakage equation: analytical and asymptotic solutions, \textit{J. Phys. A. Math. Gen.}, \textbf{33} 1221--1232, 2000.





\bibitem{Laurencot:2018} P. Lauren\c{c}ot, Mass-conserving solutions to coagulation-fragmentation equations with non-integrable fragment distribution function, \textit{Quart. Appl. Math.},  \textbf{76} 767--785, 2018.






\bibitem{Laurencot:2015} P. Lauren\c{c}ot, Weak compactness techniques and coagulation equations, \textit{Evolutionary Equations with Applications in Natural Sciences}, J.~Banasiak \& M.~Mokhtar-Kharroubi (eds.), Lecture Notes Math., \textbf{2126} 199--253, 2015.



\bibitem{Laurencot:2002} P. Lauren\c{c}ot and S. Mischler, From the discrete to the continuous coagulation-fragmentation equations, \textit{Proc. Royal Soc. Edinburgh}, \textbf{132A} 1219--1248, 2002.



\bibitem{Laurencot:2001} P. Lauren\c{c}ot and D.  Wrzosek, The discrete coagulation equations with collisional breakage, \textit{J. Statist. Phys.}, \textbf{104} 193--220, 2001.



\bibitem{Leyvraz:1981} F. Leyvraz and H. R. Tschudi, Singularities in the kinetics of coagulation processes, \textit{J. Phys. A}, \textbf{14} 3389--3405, 1981.



\bibitem{McLaughlin:1997} D. J. McLaughlin, W. Lamb and A. C. McBride, An existence and uniqueness result for a coagulation and multiple-fragmentation equation, \textit{SIAM J. Math. Anal.}, \textbf{28} 1173--1190, 1997.







\bibitem{Safronov:1972} V. S. Safronov, \textit{Evolution of the Protoplanetary Cloud and Formation of the Earth and the Planets} (Israel Program for Scientific Translations Ltd. Jerusalem), 1972.



\bibitem{Stewart:1989} I. W. Stewart, A global existence theorem for the general coagulation-fragmentation equation with unbounded kernels, \textit{Math. Methods Appl. Sci.}, \textbf{11} 627--648, 1989.



\bibitem{Vigil:2006} R. D. Vigil, I. Vermeersch and R. O. Fox, Destructive aggregation: Aggregation with collision-induced breakage, \textit{Colloid and Interface Science}, \textbf{302} 149--158, 2006.


\bibitem{Vrabie:1995} I. I. Vrabie, \textit{Compactness Methods for Nonlinear Evolutions}, 2nd edition, Pitman Monogr. Surveys Pure Appl. Math., \textbf{75} Longman, 1995.




\bibitem{Wilkins:1982} D. Wilkins, A geometrical interpretation of the coagulation equation, \textit{J. Phys. A}, \textbf{15} 1175--1178, 1992.

 }
\end{thebibliography}

\end{document}